\documentclass[10pt]{article}

\oddsidemargin 0in \textwidth 6.5in \textheight 8.5in \topmargin
-0.5in

\usepackage{amsmath}
\usepackage{amssymb}
\usepackage{graphicx}
\usepackage{epic,eepic}

\usepackage{color}

\setcounter{totalnumber}{50}
\setcounter{topnumber}{50}
\setcounter{bottomnumber}{50}

\usepackage{amstext,amsthm,amssymb,amsmath}
\usepackage{graphicx}
\usepackage{subfigure}
\usepackage{wrapfig}
\usepackage{fullpage}
\usepackage{color}
\usepackage{multirow}
\usepackage{tabulary}
\usepackage{booktabs}
\usepackage{enumerate}

\newcommand{\average}[1]{\{\!\!\!\{{#1}\}\!\!\!\}}
\newcommand{\jump}[1]{[\![{#1}]\!]}

\newtheorem{theorem}{Theorem}[section]
\newtheorem{lemma}[theorem]{Lemma}

\newtheorem{remark}[theorem]{Remark}









%





\hfuzz=\maxdimen
\tolerance=10000
\hbadness=10000

\title{An adaptive generalized multiscale discontinuous Galerkin method (GMsDGM) for high-contrast flow problems}

\author{Eric T. Chung\thanks{Department of Mathematics, The Chinese University of Hong Kong, Hong Kong SAR.
This research is partially supported by the Hong Kong RGC General Research Fund (Project number: 400411).},
Yalchin Efendiev\thanks{Department of Mathematics, Texas A\&M University, College Station, TX; Numerical Porous Media SRI Center, King Abdullah University of Science and Technology (KAUST), Thuwal 23955-6900, Kingdom of Saudi Arabia}
and Wing Tat Leung\thanks{Department of Mathematics, Texas A\&M University, College Station, TX.}
}

\begin{document}
\maketitle

\begin{abstract}

In this paper, we develop an adaptive Generalized Multiscale
Discontinuous Galerkin Method (GMsDGM)
for a class of high-contrast flow problems, and
derive a-priori and a-posteriori error estimates for the method.
Based on the a-posteriori error estimator,
we develop an adaptive enrichment algorithm for our GMsDGM
and prove its convergence.
The adaptive enrichment algorithm gives an automatic way
to enrich the approximation space in regions where the solution requires
more basis functions, which are shown to perform
well compared with a uniform enrichment.
We also discuss an approach that adaptively selects multiscale
basis functions by correlating the residual to multiscale basis functions
(cf. \cite{donoho_sparsity_review}).
The proposed error indicators are $L_2$-based and
can be inexpensively computed which makes our approach efficient.
Numerical results are presented that demonstrate the robustness
of the proposed error indicators.

\end{abstract}

\section{Introduction}
\label{sec:intro}

Model reduction techniques are often required for solving
challenging multiscale problems that have multiple scales and high contrast.
Many of these model
reduction techniques perform the discretization of the problem on a coarse
grid where coarse grid size is much larger than the fine-grid discretization.
The latter requires constructing reduced order models for the solution space
on a coarse grid. Some of these techniques involve upscaled models (e.g.,
\cite{dur91, weh02}) or multiscale methods
(e.g., \cite{Arbogast_two_scale_04, Chu_Hou_MathComp_10,ee03,
  egw10,eh09,ehg04, GhommemJCP2013,ReducedCon,MsDG,Wave,WaveGMsFEM}).

In this paper, we develop an adaptive Generalized Multiscale
Discontinuous Galerkin Method (GMsDGM)
for a class of high-contrast flow problems, and
derive a-priori and a-posteriori error estimates for the method.
We propose an adaptive enrichment algorithm for our GMsDGM
based on the a-posteriori error estimator
and prove its convergence.
The enrichment is done using inexpensive $L_2$-based error
indicators which allows adding
more basis functions in an automatic way.

The Generalized Multiscale Finite Element Method (GMsFEM), introduced in \cite{egh12},
is a generalization of the classical multiscale finite element method
(\cite{hw97}) by systematically enriching the coarse spaces and taking into account
small scale information and
complex input spaces.
While GMsFEM uses continuous Galerkin methods as coarse grid solvers,
the GMsDGM considered in this paper is based on
the interior penalty discontinuous Galerkin method as the coarse grid solver.
The discontinuous Galerkin formulation has several key advantages in multiscale finite element methods
(see \cite{eglmsMSDG}).
The basis functions for the GMsDGM
are totally decoupled across coarse element boundaries.
In addition, the GMsDGM is constructed following
the general framework on GMsFEM \cite{egh12}.
In particular, given a coarse grid partition of the computational domain,
a snapshot space is defined for each coarse element.
The snapshot space for a specific coarse element
contains all functions defined in the underlying fine grid.
A space reduction is then performed to obtain a much smaller space
by means of spectral decomposition.
Our analysis shows that we need two spectral problems
for the space reduction.
More precisely, the snapshot space is decomposed into two components,
which consists of harmonic extensions and functions that
are vanishing on coarse element boundaries.
Two separate spectral problems are used to compute reduced spaces.
A-priori error estimate is derived showing that the error
is inverse proportional to the first eigenvalue corresponding
to the first eigenfunction that is not used
in the construction of the reduced space.
We remark that similar results are obtained for GMsFEM \cite{egw10, eglp13}.

It is evident that different coarse elements need different
number of basis functions
in order to obtain accurate representation of the solution. For example, in less heterogeneous regions,
one needs fewer basis functions compared to the regions with more heterogeneities and high contrast.
It is therefore another objective of the paper to consider adaptive enrichment of basis functions.
We derive a-posteriori error estimate for the GMsDGM.
By using the error indicator, we construct an adaptive enrichment algorithm
and prove its convergence.
One important feature of our adaptive enrichment algorithm is
the ability to adaptively select basis functions in the space of harmonic extensions
and the space of functions vanishing on coarse element boundaries.
In addition, the error indicators are $L_2$-based, which can be computed efficiently.
This is an advantage over the $H^{-1}$-based
adaptive enrichment algorithm developed for GMsFEM \cite{Adaptive-GMsFEM}.
We also present a procedure for removing basis functions.
Our analysis is based on the idea in
\cite{BrennerScott,AdaptiveFEM,Adaptive-GMsFEM},
and do not consider the error due to the fine-grid
discretization of local problems and only study the errors due
to the enrichment.
In this regard, we assume that the error is largely
due to coarse-grid discretization.
The fine-grid discretization error can be considered in general
(e.g., as in \cite{abdul_yun, ohl12}) and
this will give an additional error estimator.
We remark that
there are many related activities in designing a-posteriori
error estimates \cite{Dorfler96,ohl12, abdul_yun, dinh13, nguyen13, tonn11}
 for global reduced models.
The main difference is that our error estimators are
based on special local eigenvalue problem and use the eigenstructure
of the offline space. We also discuss
an approach that adaptively selects multiscale
basis functions from the offline space by selecting
a basis with the most correlation to the local residual
(cf. \cite{donoho_sparsity_review}).

The rest of the paper is organized in the following way.
In the next section, we present the basic idea of GMsDGM.
The method is then detailed and analyzed
in Section \ref{cgdgmsfem}.
Then in Section \ref{sec:errorindicator},
we elaborate the adaptive algorithm and state the main convergence
results related to this algorithm. In Section \ref{sec:numerresults}, numerical
results are illustrated to test the performance of this adaptive algorithm.
The proofs of the main results are presented in Section \ref{sec:analysis}.

\section{Preliminaries}

\label{prelim} 
We will start this section with the problem settings and some notations.
Let $D$ be the computational domain
consisting of a medium modeled by the function $\kappa(x)$.
The high-contrast flow problem concerned in this paper is
\begin{equation}
-\mbox{div}\big(\kappa(x)\,\nabla u\big)=f\quad\text{in}\quad D,\label{eq:original}
\end{equation}
subject to the homogeneous Dirichlet boundary condition $u=g$ on
$\partial D$.
The main difficulty in numerically solving the above problem is that
$\kappa(x)$ is highly heterogeneous with many scales and
high contrast.
We assume that $\kappa \geq 1$.
In order to efficiently obtain an approximate solution to (\ref{eq:original}), we will need
the notion of fine and coarse grids.

Consider a given triangulation $\mathcal{T}^{H}$ of the domain $D$
with mesh size $H>0$. We call $\mathcal{T}^{H}$ the coarse
grid and $H$ the coarse mesh size. Elements of $\mathcal{T}^{H}$
are called coarse grid blocks and we use $N$ to denote the number of coarse grid blocks.
The set of all coarse grid edges is
denoted by $\mathcal{E}^{H}$.
We also introduce a finer triangulation $\mathcal{T}^{h}$ of the computational domain $D$,
obtained by
a conforming refinement of
the coarse grid $\mathcal{T}^{H}$. We call $\mathcal{T}^{h}$ the
fine grid and $h>0$ the fine mesh size. We remark that the use of
the conforming refinement is only to simplify the discussion of the
methodology and is not a restriction of the method.

Now we present the framework of GMsDGM. The methodology consists
of two main ingredients, namely, the construction of local basis functions
and the global coarse grid level coupling.
For the local basis functions,
a snapshot space $V^{i,\text{snap}}$ is
first constructed for each coarse grid block $K_{i} \in\mathcal{T}^H$.
The snapshot space contains a large library of basis functions, which can be used to obtain
a fine scale approximate solution to (\ref{eq:original}).
A spectral problem
is then solved in the snapshot space $V^{i,\text{snap}}$ and eigenfunctions corresponding
to dominant modes are used as the final basis functions. The resulting space
is called the local offline space $V^{i,\text{off}}$ for the $i$-th coarse
grid block $K_i$. The global offline space $V^{\text{off}}$ is then defined
as the linear span of all these $V^{i,\text{off}}$, for $i=1,2,\cdots,N$.
This global offline space $V^{\text{off}}$ will be used as the approximation space of
our discontinuous Galerkin method, which can be
formulated as: find $u_{H}^{\text{DG}}\in V^{\text{off}}$
such that
\begin{equation}
a_{\text{DG}}(u_{H}^{\text{DG}},v)=(f,v),\quad\forall v\in V^{\text{off}},\label{eq:ipdg}
\end{equation}
where the bilinear form $a_{\text{DG}}$ is defined as
\begin{equation}
a_{\text{DG}}(u,v)=a_{H}(u,v)-\sum_{E\in\mathcal{E}^{H}}\int_{E}\Big(\average{{\kappa}\nabla{u}\cdot{n}_{E}}\jump{v}+\average{{\kappa}\nabla{v}\cdot{n}_{E}}\jump{u}\Big)+\sum_{E\in\mathcal{E}^{H}}\frac{\gamma}{h}\int_{E}\overline{\kappa}\jump{u} \jump{v} \label{eq:bilinear-ipdg}
\end{equation}
with
\begin{equation}
a_{H}({u},{v})=\sum_{K\in\mathcal{T}_{H}}a_{H}^{K}(u,v),\quad a_{H}^{K}(u,v)=\int_{K}\kappa\nabla u\cdot\nabla v,
\end{equation}
where $\gamma>0$ is a penalty parameter, ${n}_{E}$ is a fixed unit
normal vector defined on the coarse edge $E \in \mathcal{E}^H$.
Note that, in (\ref{eq:bilinear-ipdg}),
the average and the jump operators are defined in the classical way.
Specifically, consider an interior coarse edge $E\in\mathcal{E}^{H}$
and let $K^{+}$ and $K^{-}$ be the two coarse grid blocks sharing
the edge $E$. For a piecewise smooth function $G$, we define
\[
\average{G}=\frac{1}{2}(G^{+}+G^{-}),\quad\quad\jump{G}=G^{+}-G^{-},\quad\quad\text{ on }\, E,
\]
where $G^{+}=G|_{K^{+}}$ and $G^{-}=G|_{K^{-}}$ and we assume that
the normal vector ${n}_{E}$ is pointing from $K^{+}$ to $K^{-}$.
Moreover, on the edge $E$, we define $\overline{\kappa} = (\kappa_{K^+}+\kappa_{K^-})/2$
where $\kappa_{K^{\pm}}$ is the maximum value of $\kappa$ over $K^{\pm}$.
For a coarse edge $E$ lying on the boundary $\partial D$, we define
\[
\average{G}=\jump{G}=G,\quad \text{ and }\quad \overline{\kappa} = \kappa_{K} \quad\quad\text{ on }\, E,
\]
where we always assume that ${n}_{E}$ is pointing outside of $D$.
We note that the DG coupling (\ref{eq:ipdg})
is the classical interior penalty discontinuous Galerkin (IPDG) method \cite{IPDGbook}
with our multiscale basis functions as the approximation space.


\section{GMsDGM for high-contrast flow problems}

\label{cgdgmsfem} 

In this section, we will give a detailed description of the method.
We will first give the construction of the snapshot space, and then
give the definitions of the local spectral problems for the construction of the offline space.
Furthermore, a priori estimate of the method will be derived.

Let $K_{i} \in\mathcal{T}^H$ be a given coarse grid block.
We will define two types of snapshot spaces.
The first type of local snapshot space
$V^{i,\text{snap}}_1$ for the coarse grid block $K_{i}$ is defined
as the linear span of all harmonic extensions. Specifically, given
a function $\delta_{k}$ defined on $\partial K_i$, we find ${\psi}_{k}^{i,\text{snap}} \in V_h(K_i)$ by
\begin{equation}
\begin{split}\int_{K_{i}}\kappa\nabla\psi_{k}^{i,\text{snap}} \cdot\nabla v & ={0},\quad\text{\ensuremath{\forall}}\; v\in V^0_{h}(K_{i}), \\
\psi_{k}^{i,\text{snap}} & =\delta_{k},\quad\text{ on }\;\partial K_{i},
\end{split}
\label{eq:snapproblem1}
\end{equation}
where $V_h(K_i)$ is the standard conforming piecewise linear finite element space with respect to the fine grid defined on $K_i$,
$V_h^0(K_i)$ is the subspace of $V_h(K_i)$ containing functions vanishing on $\partial K_i$
and $\delta_k$ is piecewise linear on $\partial K_i$ with respect to the fine grid
such that $\delta_k$ has the value one at the $k$-th fine grid node and value zero at all the remaining fine grid nodes.
The linear span of the above harmonic extensions is the local snapshot
space $V^{i,\text{snap}}_1$, namely
\[
V^{i,\text{snap}}_1=\text{span}\{{\psi}_{k}^{i,\text{snap}},\quad k=1,2,\cdots,M^{i,\text{snap}}\},
\]
where $M^{i,\text{snap}}$ is the number of basis functions in $V^{i,\text{snap}}_1$,
which is also equal to the number of fine grid nodes on $\partial K_i$.
The second type of local snapshot space $V^{i,\text{snap}}_2$ for the coarse grid
block $K_{i}$ is defined as $V^{i,\text{snap}}_2 = V_h^0(K_i)$.
It is easy to see that $V_h(K_i) = V^{i,\text{snap}}_1 + V^{i,\text{snap}}_2$,
namely the space $V_h(K_i)$ is decomposed as the sum of harmonic extensions
and functions vanishing on the boundary $\partial K_i$.
Moreover, the global snapshot space $V^{\text{snap}}_1$
is defined so that any $v\in V^{\text{snap}}_1$
if $v|_{K_i} \in V^{i,\text{snap}}_1$.
The space $V^{\text{snap}}_2$ is defined similarly.

We will perform dimension reductions on the above snapshot spaces
by the use of some carefully selected spectral problems. Based on our analysis to be presented
in this section, we define the spectral problem for $V^{i,\text{snap}}_1$
as finding eigenpairs $(\phi_k^{(i)}, \lambda_{1,k}^{(i)})$, $k=1,2,\cdots, M^{i,\text{snap}}$, such that
\begin{equation}
\int_{K_{i}}\kappa\nabla\phi_k^{(i)} \cdot\nabla v=\frac{\lambda_{1,k}^{(i)}}{H}\int_{\partial K_{i}}\widetilde{\kappa}{\phi_k^{(i)}}{v},\quad\quad\forall v\in V^{i,\text{snap}}_1,\label{eq:spec-dg}
\end{equation}
where $\widetilde{\kappa}$ is the maximum of $\overline{\kappa}$ over all coarse edges $E\in\partial K_i$.
Moreover, we assume that
\begin{equation*}
\lambda_{1,1}^{(i)} < \lambda_{1,2}^{(i)} < \cdots < \lambda_{1,M^{i,\text{snap}}}^{(i)}.
\end{equation*}
For the space $V^{i,\text{snap}}_2$, we define the spectral problem as finding eigenpairs $(\xi_k^{(i)}, \lambda_{2,k}^{(i)})$, $k=1,2,\cdots$, such that
\begin{equation}
\int_{K_{i}}\kappa\nabla\xi_k^{(i)}\cdot\nabla v=\frac{\lambda_{2,k}^{(i)}}{H^{2}}\int_{K_{i}}\kappa{\xi_k^{(i)}}{v}, \quad\quad\forall v\in V^{i,\text{snap}}_2,\label{eq:spec-dg-1}
\end{equation}
where we also assume that
\begin{equation*}
\lambda_{2,1}^{(i)} < \lambda_{2,2}^{(i)} < \cdots
\end{equation*}
In the spectral problems (\ref{eq:spec-dg}) and (\ref{eq:spec-dg-1}), we
will take respectively the first $l_{1,i}$ and $l_{2,i}$ eigenfunctions to form
the offline space for the coarse grid block $K_i$.
The local offline spaces are then defined as
\begin{align*}
V^{i,\text{off}}_1 & =\text{span}\{{\phi}_{l}^{(i)},\quad l=1,2,\cdots,l_{1,i}\}, \\
V^{i,\text{off}}_2 & =\text{span}\{{\xi}_{l}^{(i)},\quad l=1,2,\cdots,l_{2,i}\}.
\end{align*}
We define $V^{i,\text{off}} = V^{i,\text{off}}_1 + V^{i,\text{off}}_2$.
The global offline space $V_1^{\text{off}}$
is defined so that the restriction of any function $v\in V_1^{\text{off}}$
on the coarse grid block $K_i$ belongs to $V_1^{i,\text{off}}$.
The definition for $V_2^{\text{off}}$ is defined similarly.
In addition, we define $V^{\text{off}} = V^{\text{off}}_1 + V^{\text{off}}_2$.
This space is used as the approximation space in (\ref{eq:ipdg}).

Now we will analyze the method defined in (\ref{eq:ipdg}).
For any piecewise smooth function ${u}$, we define the DG-norm by
\[
\|{u}\|_{\text{DG}}^{2}=a_{H}({u},{u})+\sum_{E\in\mathcal{E}_{H}}\frac{\gamma}{h}\int_{E}\overline{\kappa}\jump{u}^{2}\; ds.
\]
Let $K$ be a coarse grid block and let ${n}_{\partial K}$ be the
unit outward normal vector on $\partial K$. We denote $V_{h}(\partial K)$
by the restriction of the conforming space $V_{h}(K)$ on $\partial K$.
The normal flux $\kappa \nabla u \cdot \,{n}_{\partial K}$ is understood as
an element in $V_{h}(\partial K)$ and is defined by
\begin{equation}
\int_{\partial K}({\kappa}\nabla{u}\cdot{n}_{\partial K})\cdot{v}=\int_{K} \kappa\nabla u\cdot\nabla\widehat{{v}},\quad{v}\in V^{h}(\partial K),\label{eq:flux}
\end{equation}
where $\widehat{{v}} \in V_h(K)$ is the harmonic extension of ${v}$ in $K$.
By the Cauchy-Schwarz inequality,
\[
\int_{\partial K}({\kappa}\nabla{u}\cdot{n}_{\partial K})\cdot{v}\leq a_{H}^{K}(u,u)^{\frac{1}{2}}\, a_{H}^{K}(\widehat{v},\widehat{v})^{\frac{1}{2}}.
\]
By an inverse inequality and the fact that $\widehat{v}$ is the harmonic
extension of $v$
\begin{equation}
a_{H}^{K}(\widehat{v},\widehat{v})\leq\kappa_{K}C_{\text{inv}}^{2}h^{-1}\int_{\partial K}|v|^{2},
\label{eq:maxeig}
\end{equation}
where we recall that $\kappa_{K}$ is the maximum of $\kappa$ over $K$ and $C_{\text{inv}}>0$ is
the constant from inverse inequality. Thus,
\[
\int_{\partial K}({\kappa}\nabla{u}\cdot{n}_{\partial K})\cdot{v}\leq\kappa_{K}^{\frac{1}{2}}C_{\text{inv}}h^{-\frac{1}{2}}\|v\|_{L^{2}(\partial K)}\, a_{H}^{K}(u,u)^{\frac{1}{2}}.
\]
This shows that
\begin{equation}
\int_{\partial K}|{\kappa}\nabla{u}\cdot{n}_{\partial K}|^{2}\leq\kappa_{K}C_{\text{inv}}^{2}h^{-1}a_{H}^{K}(u,u).
\label{eq:fluxbound}
\end{equation}

Our first step in the development of an a priori estimate is to establish the continuity
and the coercivity of the bilinear form (\ref{eq:bilinear-ipdg})
with respect to the DG-norm.

\begin{lemma} Assume that the penalty parameter $\gamma$ is chosen
so that $\gamma>C_{\text{\rm inv}}^{2}$. The bilinear form $a_{\text{DG}}$
defined in (\ref{eq:bilinear-ipdg}) is continuous and coercive, that
is,
\begin{eqnarray}
a_{\text{DG}}({u},{v}) & \leq & a_{1}\|{u}\|_{\text{DG}}\,\|{v}\|_{\text{DG}}, \label{lem:cont} \\
a_{\text{DG}}({u},{u}) & \geq & a_{0}\|{u}\|_{\text{DG}}^{2}, \label{lem:coer}
\end{eqnarray}
for all ${u},{v}$, where $a_{0}=1-C_{\text{\rm inv}}\gamma^{-\frac{1}{2}}>0$ and $a_{1}=1+C_{\text{\rm inv}}\gamma^{-\frac{1}{2}}$.
\end{lemma}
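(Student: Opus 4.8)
The plan is to prove the two estimates by splitting the bilinear form $a_{\text{DG}}$ into its ``volume'' part $a_H(u,v)$, its ``consistency'' part $T(u,v) = \sum_{E}\int_E(\average{\kappa\nabla u\cdot n_E}\jump{v}+\average{\kappa\nabla v\cdot n_E}\jump{u})$, and its ``penalty'' part $J(u,v)=\sum_E\frac{\gamma}{h}\int_E\overline{\kappa}\jump{u}\jump{v}$, so that $a_{\text{DG}}(u,v)=a_H(u,v)-T(u,v)+J(u,v)$. The key observation is that $\|u\|_{\text{DG}}^2 = a_H(u,u)+J(u,u)\cdot\frac{1}{\gamma}\cdot\gamma$; more precisely $\|u\|_{\text{DG}}^2 = a_H(u,u) + \sum_E \frac{\gamma}{h}\int_E\overline{\kappa}\jump{u}^2$, so both the volume and penalty contributions are directly controlled by the DG-norm and only the consistency term $T$ needs work.

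The main estimate to establish is the bound
\[
|T(u,v)| \le C_{\text{inv}}\gamma^{-1/2}\,\|u\|_{\text{DG}}\,\|v\|_{\text{DG}}.
\]
To get this, first I would bound a single edge contribution $\int_E \average{\kappa\nabla u\cdot n_E}\jump{v}$ by Cauchy--Schwarz in $L^2(E)$ as $\|\average{\kappa\nabla u\cdot n_E}\|_{L^2(E)}\,\|\jump{v}\|_{L^2(E)}$. For the flux factor, I would use the definition of $\average{\cdot}$ together with the already-proven flux bound (\ref{eq:fluxbound}), namely $\int_{\partial K}|\kappa\nabla u\cdot n_{\partial K}|^2 \le \kappa_K C_{\text{inv}}^2 h^{-1} a_H^K(u,u)$, applied on $K^+$ and $K^-$; summing over the two blocks adjacent to $E$ and using $\kappa_{K^\pm}\le \overline{\kappa}$ on $E$ gives a bound of the form $\|\average{\kappa\nabla u\cdot n_E}\|_{L^2(E)}^2 \lesssim \overline{\kappa}\,C_{\text{inv}}^2 h^{-1}(a_H^{K^+}(u,u)+a_H^{K^-}(u,u))$. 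For the jump factor I would multiply and divide by $(\gamma\overline{\kappa}/h)^{1/2}$ so that $\|\jump{v}\|_{L^2(E)}=(\gamma\overline{\kappa}/h)^{-1/2}\cdot(\gamma\overline{\kappa}/h)^{1/2}\|\jump{v}\|_{L^2(E)}$, turning the jump into exactly the penalty term in $\|v\|_{\text{DG}}$. The factors of $\overline{\kappa}$ and $h^{-1}$ then cancel, leaving $C_{\text{inv}}\gamma^{-1/2}$ times a product $a_H(u,u)^{1/2}\cdot J(v,v)^{1/2}$, and after summing over all edges $E\in\mathcal{E}^H$ and applying the discrete Cauchy--Schwarz inequality one arrives at $|T(u,v)| \le C_{\text{inv}}\gamma^{-1/2}\|u\|_{\text{DG}}\|v\|_{\text{DG}}$ (with the symmetric term handled identically, possibly contributing a harmless factor that one absorbs into the constant by a slightly more careful split).

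Given the bound on $T$, continuity (\ref{lem:cont}) follows from the triangle inequality: $a_{\text{DG}}(u,v) \le a_H(u,v) + |T(u,v)| + J(u,v) \le \|u\|_{\text{DG}}\|v\|_{\text{DG}} + C_{\text{inv}}\gamma^{-1/2}\|u\|_{\text{DG}}\|v\|_{\text{DG}}$, giving $a_1 = 1 + C_{\text{inv}}\gamma^{-1/2}$. For coercivity (\ref{lem:coer}), set $v=u$: then $a_{\text{DG}}(u,u) = a_H(u,u) + J(u,u) - T(u,u) \ge \|u\|_{\text{DG}}^2 - C_{\text{inv}}\gamma^{-1/2}\|u\|_{\text{DG}}^2 = (1-C_{\text{inv}}\gamma^{-1/2})\|u\|_{\text{DG}}^2$, which is positive precisely under the hypothesis $\gamma > C_{\text{inv}}^2$, yielding $a_0 = 1 - C_{\text{inv}}\gamma^{-1/2}$.

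The main obstacle I anticipate is bookkeeping rather than conceptual: making sure the constant that emerges from handling both the $\average{\kappa\nabla u\cdot n_E}\jump{v}$ and $\average{\kappa\nabla v\cdot n_E}\jump{u}$ terms is exactly $C_{\text{inv}}\gamma^{-1/2}$ and not, say, $2C_{\text{inv}}\gamma^{-1/2}$. This requires being careful with the $\frac12$ in the averaging operator and with how the sum of $a_H^{K^+}$ and $a_H^{K^-}$ over all edges relates to $a_H(u,u)$ (each coarse block has a bounded number of edges, but here the constant works out cleanly because of the factor $\frac14$ from $|\average{G}|^2\le\frac12(|G^+|^2+|G^-|^2)$ combined with the edge-to-element counting). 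One must also treat boundary edges, where $\average{G}=\jump{G}=G$ and only one adjacent block exists, but the same chain of inequalities applies verbatim with a single term.
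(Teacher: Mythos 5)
Your proposal is correct and follows essentially the same route as the paper: Cauchy--Schwarz on each coarse edge, the flux bound (\ref{eq:fluxbound}) for the average of the normal flux, reweighting the jump by $(\gamma\overline{\kappa}/h)^{1/2}$ to produce the penalty part of the DG-norm, and a final discrete Cauchy--Schwarz over edges that pairs $a_H(u,u)^{1/2}$ with the jump of $v$ and vice versa, which is exactly the "more careful split" you anticipate for avoiding a spurious factor of $2$ and obtaining the constant $C_{\text{inv}}\gamma^{-1/2}$ in both (\ref{lem:cont}) and (\ref{lem:coer}).
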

\textit{Proof}. By the definition of $a_{\text{DG}}$,
we have
\[
a_{\text{DG}}({u},{v})=a_{H}({u},{v})-\sum_{E\in\mathcal{E}^{H}}\int_{E}\Big(\average{{\kappa}\nabla{u}\cdot{n}_{E}}\jump{v}+\average{{\kappa}\nabla{v}\cdot{n}_{E}}\jump{u}\Big)+\sum_{E\in\mathcal{E}^{H}}\frac{\gamma}{h}\int_{E}\overline{\kappa}\jump{u} \jump{v}.
\]
Notice that
\[
a_{H}({u},{v})+\sum_{E\in\mathcal{E}^{H}}\frac{\gamma}{h}\int_{E}\overline{\kappa}\jump{u}\cdot\jump{v} \leq\|u\|_{\text{DG}}\,\|v\|_{\text{DG}}.
\]
For an interior coarse edge $E\in\mathcal{E}^{H}$, we let $K^{+},K^{-}\in\mathcal{T}^{H}$
be the two coarse grid blocks having the edge $E$. By the Cauchy-Schwarz
inequality, we have
\begin{equation}
\int_{E}\average{{\kappa} \nabla u\cdot{n}_{E}}\cdot\jump{v}\leq\Big(h\int_{E}\average{{\kappa}\nabla{u}\cdot{n}_{E}}^{2}(\overline{\kappa})^{-1}\Big)^{\frac{1}{2}}\Big(\frac{1}{h}\int_{E}\overline{\kappa}\jump{v}^{2}\Big)^{\frac{1}{2}}.\label{eq:cont1}
\end{equation}
Notice that
\begin{equation*}
 h\int_{E}\average{{\kappa}\nabla{u}\cdot{n}_{E}}^{2}(\overline{\kappa})^{-1}
\leq  h\Big(\int_{E}({\kappa}^{+}\nabla{u}^{+}\cdot{n}_{E})^{2}(\kappa_{K^+})^{-1}+\int_{E}({\kappa}^{-}\nabla{u}^{-}\cdot{n}_{E})^{2}(\kappa_{K^-})^{-1}\Big)
\end{equation*}
where ${u}^{\pm}={u}|_{K^{\pm}}$, $\kappa^{\pm}=\kappa|_{K^{\pm}}$.
So, summing the above over all $E$ and by (\ref{eq:fluxbound}), we have
\begin{equation*}
h\sum_{E\in\mathcal{E}^{H}}\int_{E}\average{{\kappa}\nabla{u}\cdot{n}_{E}}^{2}(\overline{\kappa})^{-1}  \leq h\sum_{K\in\mathcal{T}_{H}}\int_{\partial K}(\kappa \nabla{u}\cdot{n}_{\partial K})^{2} (\kappa_{K})^{-1}
  \leq  C_{\text{inv}}^{2}a_{H}(u,u).
\end{equation*}
Thus we have
\begin{equation}
\sum_{E\in\mathcal{E}^{H}}\int_{E}\average{{\kappa}\nabla{u}\cdot{n}_{E}}\jump{v}\leq C_{\text{inv}}a_{H}(u,u)^{\frac{1}{2}}\Big(\sum_{E\in\mathcal{E}^{H}}\frac{1}{h}\int_{E}\overline{\kappa}\jump{v}^{2}\; ds\Big)^{\frac{1}{2}}.
\label{eq:cont5}
\end{equation}
Similarly, we have
\[
\sum_{E\in\mathcal{E}^{H}}\int_{E}\average{{\kappa}\nabla{v}\cdot{n}_{E}}\jump{u} \leq C_{\text{inv}}a_{H}(v,v)^{\frac{1}{2}}\Big(\sum_{E\in\mathcal{E}^{H}}\frac{1}{h}\int_{E}\overline{\kappa}\jump{u}^{2}\; ds\Big)^{\frac{1}{2}}.
\]
Summing the above two inequalities, we have
\begin{equation}
\sum_{E\in\mathcal{E}^{H}}\int_{E}\Big(\average{{\kappa}\nabla{u}\cdot{n}_{E}}\jump{v}+\average{{\kappa}\nabla{v}\cdot{n}_{E}}\jump{u}\Big)\leq C_{\text{inv}}\gamma^{-\frac{1}{2}}\|u\|_{\text{DG}}\,\|v\|_{\text{DG}}.\label{eq:cont4}
\end{equation}
This proves the continuity (\ref{lem:cont}).

For the coercivity (\ref{lem:coer}), we have
\[
a_{\text{DG}}({u},{u})=\|u\|_{\text{DG}}^{2}-\sum_{E\in\mathcal{E}^{H}}\int_{E}\Big(\average{\kappa\nabla u\cdot{n}_{E}}\cdot\jump{u}+\average{\kappa\nabla u\cdot{n}_{E}}\cdot\jump{u}\Big).
\]
By (\ref{eq:cont4}), we have
\[
a_{\text{DG}}({u},{u})\geq(1-C_{\text{inv}}\gamma^{-\frac{1}{2}})\|u\|_{\text{DG}}^{2},
\]
which gives the desired result.

\begin{flushright}
$\square$
\par\end{flushright}

In the following,
we will prove an a priori estimate of the method (\ref{eq:ipdg}).
First, we let
\begin{equation*}
V^h_{\text{DG}} = \{ v \in L^2(D) \, : \, v|_{K} \in V^h(K) \}.
\end{equation*}
Let $u_{h}\in V_{\text{DG}}^{h}$ be the fine grid solution which
satisfies
\begin{equation}
a_{\text{DG}}(u_{h},v)=(f,v),\quad\forall v\in V_{\text{DG}}^{h}.\label{eq:ipdgfine}
\end{equation}
It is well-known that $u_{h}$ converges to the exact solution $u$
in the DG-norm as the fine mesh size $h\rightarrow0$. Next, we define
a projection $u_{1}\in V^{\text{snap}}_1$ of $u_{h}$ in the snapshot
space by the following construction. For each coarse grid block $K_i$,
the restriction of $u_{1}$ on $K_i$ is defined as the harmonic extension
of $u_{h}$, that is,
\begin{equation}
\begin{split}\int_{K_{i}}\kappa\nabla u_{1}\cdot\nabla v & ={0},\quad\quad\text{\ensuremath{\forall}}\; v\in V^0_{h}(K_{i})\\
{u}_{1} & =u_{h},\quad\text{ on }\;\partial K_i.
\end{split}
\label{eq:proj}
\end{equation}



The following theorem
gives an a priori estimate for the GMsDGM (\ref{eq:ipdg}).

\begin{theorem} Let $u_{h}\in V_{\text{DG}}^{h}$ be the fine grid
solution defined in (\ref{eq:ipdgfine}) and $u_{H}$ be the GMsDGM
solution defined in (\ref{eq:ipdg}). Then we have
\begin{equation*}
\begin{split}
\|u_{h}-u_{H}\|_{\text{DG}}^{2}\leq
C\Big( & \sum_{i=1}^{N}\frac{H}{ \widetilde{\kappa} \lambda_{1,l_{1,i}+1}^{{(i)}}}(1+\frac{\gamma H}{h\lambda_{1,l_{1,i}+1}^{{(i)}}})
\int_{\partial K_{i}} (\kappa\nabla u_{1}\cdot n_{\partial K})^{2}
+\sum_{K\in\mathcal{T}_{H}}\cfrac{H^{2}}{\lambda_{2,l_{2,i}+1}^{{(i)}}} \|f\|_{L^2(K)}^{2} \\
& +C^2_{\text{\rm inv}} \sum_{E\in\mathcal{E}^H} \frac{1}{h} \int_E \overline{\kappa} \jump{u_h}^2
\Big),
\end{split}
\end{equation*}
where $u_{1}$ is defined in (\ref{eq:proj}).
\end{theorem}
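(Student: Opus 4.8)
The plan is to prove the theorem by a quasi-optimality (Céa-type) argument together with an explicit near-best approximant in $V^{\text{off}}$ built from the two spectral decompositions, and then to estimate the approximation error term by term.

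First I would record Galerkin orthogonality: since $V^{\text{off}}\subseteq V_{\text{DG}}^{h}$, subtracting (\ref{eq:ipdg}) from (\ref{eq:ipdgfine}) gives $a_{\text{DG}}(u_{h}-u_{H},w)=0$ for all $w\in V^{\text{off}}$. Feeding this into the coercivity (\ref{lem:coer}) and continuity (\ref{lem:cont}) of $a_{\text{DG}}$ (valid since $\gamma>C_{\text{inv}}^{2}$) in the usual way gives $\|u_{h}-u_{H}\|_{\text{DG}}\le(a_{1}/a_{0})\|u_{h}-v\|_{\text{DG}}$ for every $v\in V^{\text{off}}$, so it suffices to construct one good $v$ and bound $\|u_{h}-v\|_{\text{DG}}^{2}$. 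On each coarse block $K_{i}$ I would write $u_{h}=u_{1}+b_{i}$, where $u_{1}$ is the harmonic extension of $u_{h}|_{\partial K_{i}}$ from (\ref{eq:proj}) and $b_{i}:=u_{h}-u_{1}\in V_{h}^{0}(K_{i})=V^{i,\text{snap}}_{2}$. Expanding $u_{1}|_{K_{i}}$ in the eigenbasis $\{\phi_{k}^{(i)}\}$ of (\ref{eq:spec-dg}), normalized so that $\frac{1}{H}\int_{\partial K_{i}}\widetilde{\kappa}\,\phi_{j}^{(i)}\phi_{k}^{(i)}=\delta_{jk}$, and $b_{i}$ in the eigenbasis $\{\xi_{k}^{(i)}\}$ of (\ref{eq:spec-dg-1}), normalized so that $\frac{1}{H^{2}}\int_{K_{i}}\kappa\,\xi_{j}^{(i)}\xi_{k}^{(i)}=\delta_{jk}$, I set $v|_{K_{i}}=v_{1}+v_{2}$, where $v_{1},v_{2}$ are the truncations of these expansions to the first $l_{1,i}$ and $l_{2,i}$ modes; then $v\in V^{\text{off}}$ and $u_{h}-v=(u_{1}-v_{1})+(b_{i}-v_{2})$ on $K_{i}$, with $b_{i}-v_{2}$ vanishing on $\partial K_{i}$.

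For the volume part of the DG-norm I would use that inside each block $u_{1}-v_{1}$ is $\kappa$-harmonic while $b_{i}-v_{2}\in V_{h}^{0}(K_{i})$, so by (\ref{eq:snapproblem1}) they are $a_{H}^{K_{i}}$-orthogonal and $a_{H}(u_{h}-v,u_{h}-v)=\sum_{i}\big(a_{H}^{K_{i}}(u_{1}-v_{1},u_{1}-v_{1})+a_{H}^{K_{i}}(b_{i}-v_{2},b_{i}-v_{2})\big)$. The harmonic term I would estimate through the flux identity (\ref{eq:flux}): since $\phi_{k}^{(i)}$ is harmonic, $\int_{\partial K_{i}}(\kappa\nabla u_{1}\cdot n_{\partial K})\phi_{k}^{(i)}=a_{H}^{K_{i}}(u_{1},\phi_{k}^{(i)})$, so the $k$-th coefficient of $u_{1}$ is $(\lambda_{1,k}^{(i)})^{-1}\int_{\partial K_{i}}(\kappa\nabla u_{1}\cdot n_{\partial K})\phi_{k}^{(i)}$; hence $a_{H}^{K_{i}}(u_{1}-v_{1},u_{1}-v_{1})=\sum_{k>l_{1,i}}(\lambda_{1,k}^{(i)})^{-1}\big(\int_{\partial K_{i}}(\kappa\nabla u_{1}\cdot n_{\partial K})\phi_{k}^{(i)}\big)^{2}\le(\lambda_{1,l_{1,i}+1}^{(i)})^{-1}\sum_{k}(\cdots)^{2}$, and a Parseval identity for the $\widetilde{\kappa}$-weighted $L^{2}(\partial K_{i})$ inner product (using that $\{\phi_{k}^{(i)}|_{\partial K_{i}}\}$ is a basis of $V_{h}(\partial K_{i})$ and that $\widetilde{\kappa}$ is constant on $\partial K_{i}$) identifies $\sum_{k}\big(\int_{\partial K_{i}}(\kappa\nabla u_{1}\cdot n_{\partial K})\phi_{k}^{(i)}\big)^{2}$ with $\frac{H}{\widetilde{\kappa}}\int_{\partial K_{i}}(\kappa\nabla u_{1}\cdot n_{\partial K})^{2}$, giving $a_{H}^{K_{i}}(u_{1}-v_{1},u_{1}-v_{1})\le\frac{H}{\widetilde{\kappa}\lambda_{1,l_{1,i}+1}^{(i)}}\int_{\partial K_{i}}(\kappa\nabla u_{1}\cdot n_{\partial K})^{2}$. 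For the bubble term I would test (\ref{eq:ipdgfine}) with $b_{i}-v_{2}$ extended by zero; because it vanishes on $\partial K_{i}$ all jump and penalty contributions involving it drop out, leaving $a_{H}^{K_{i}}(b_{i}-v_{2},b_{i}-v_{2})=(f,b_{i}-v_{2})_{K_{i}}+\sum_{E\subset\partial K_{i}}\theta_{E}\int_{E}(\kappa\nabla(b_{i}-v_{2})\cdot n_{E})\jump{u_{h}}$ with $\theta_{E}\in\{\tfrac12,1\}$ — the second sum being a consistency residual that appears only because $u_{h}$ jumps across coarse edges. Bounding the first term by Cauchy--Schwarz plus the spectral gap $\|b_{i}-v_{2}\|_{L^{2}(K_{i})}^{2}\le H^{2}(\lambda_{2,l_{2,i}+1}^{(i)})^{-1}a_{H}^{K_{i}}(b_{i}-v_{2},b_{i}-v_{2})$, and the second by Cauchy--Schwarz plus the weighted flux bound coming from (\ref{eq:fluxbound}) (where the factor $\kappa_{K}$ cancels against $\overline{\kappa}^{-1}\le 2\kappa_{K}^{-1}$, exactly as in the continuity lemma), then using Young's inequality to absorb the $a_{H}^{K_{i}}(b_{i}-v_{2},b_{i}-v_{2})$ factors on the left, yields $a_{H}^{K_{i}}(b_{i}-v_{2},b_{i}-v_{2})\le C\big(\frac{H^{2}}{\lambda_{2,l_{2,i}+1}^{(i)}}\|f\|_{L^{2}(K_{i})}^{2}+C_{\text{inv}}^{2}\sum_{E\subset\partial K_{i}}\frac{1}{h}\int_{E}\overline{\kappa}\jump{u_{h}}^{2}\big)$.

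For the jump part of the DG-norm I would use that $b_{i}-v_{2}=0$ and $u_{1}=u_{h}$ on $\partial K_{i}$, so $\jump{u_{h}-v}|_{E}=\jump{u_{1}-v_{1}}|_{E}$ on every coarse edge; hence $\frac{\gamma}{h}\sum_{E}\int_{E}\overline{\kappa}\jump{u_{h}-v}^{2}\le\frac{2\gamma}{h}\sum_{i}\widetilde{\kappa}_{K_{i}}\|u_{1}-v_{1}\|_{L^{2}(\partial K_{i})}^{2}$ (using $\overline{\kappa}_{E}\le\widetilde{\kappa}_{K_{i}}$ on $E\subset\partial K_{i}$), and by the $\widetilde{\kappa}$-orthonormality of the $\phi_{k}^{(i)}$ and the spectral gap one more time, $\widetilde{\kappa}_{K_{i}}\|u_{1}-v_{1}\|_{L^{2}(\partial K_{i})}^{2}=H\sum_{k>l_{1,i}}(\text{coeff}_{k})^{2}\le\frac{H}{\lambda_{1,l_{1,i}+1}^{(i)}}a_{H}^{K_{i}}(u_{1}-v_{1},u_{1}-v_{1})\le\frac{H^{2}}{\widetilde{\kappa}(\lambda_{1,l_{1,i}+1}^{(i)})^{2}}\int_{\partial K_{i}}(\kappa\nabla u_{1}\cdot n_{\partial K})^{2}$. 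Adding the volume and jump estimates, summing over $i$ (noting each coarse edge lies on at most two blocks), and collecting the two harmonic contributions into $\frac{H}{\widetilde{\kappa}\lambda_{1,l_{1,i}+1}^{(i)}}\big(1+\frac{\gamma H}{h\lambda_{1,l_{1,i}+1}^{(i)}}\big)$, produces exactly the stated bound. The step I expect to be the main obstacle is the bubble-term analysis: getting the local identity right for the discontinuous $u_{h}$, controlling the consistency residual with a contrast-robust (weighted) flux inequality so that no power of $\max_{K}\kappa$ survives, and the companion Parseval/duality arguments, which hinge on recognizing the boundary traces of the $\phi_{k}^{(i)}$ as a basis of $V_{h}(\partial K_{i})$.
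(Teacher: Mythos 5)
Your proposal is correct and follows essentially the same route as the paper: the same splitting $u_h=u_1+(u_h-u_1)$ into a harmonic part and a bubble part, the same spectral truncations as the near-best element of $V^{\text{off}}$, the same flux-identity/Parseval argument yielding the $\frac{H}{\widetilde{\kappa}\lambda_{1,l_{1,i}+1}^{(i)}}\bigl(1+\frac{\gamma H}{h\lambda_{1,l_{1,i}+1}^{(i)}}\bigr)$ factor, the same testing of the fine-grid equation with the bubble remainder (producing the $(f,\cdot)$ term and the consistency term in $\jump{u_h}$, bounded via the spectral gap and the weighted flux inequality), and the same coercivity/Galerkin-orthogonality/triangle-inequality assembly, which is exactly the C\'ea argument you state up front. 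The only differences are presentational (you invoke quasi-optimality first and construct the approximant second, and you make the Parseval step for the normal flux slightly more explicit than the paper's coefficient identity), not mathematical.
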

\textit{Proof}.
First, we write $u_{h}=u_{1}+u_{2}$ where $u_{2}=u-u_{1}.$
Notice that, on each coarse grid block $K_{i}$, the functions $u_{1}$
and $u_{2}$ can be represented by
\begin{equation}
u_{1}=\sum_{l=1}^{M_{i}}c_{l}\phi_{l}^{(i)}
\quad\text{and}\quad
u_{2}=\sum_{l \geq 1}d_{l}\xi_{l}^{(i)}
\label{eq:expansion}
\end{equation}
where $M_{i}=M^{i,\text{snap}}$ and we assume that the functions
$\phi_{l}^{(i)}$ and $\xi_l^{(i)}$ are normalized so that
\[
\int_{\partial K_{i}}\overline{\kappa}(\phi_{l}^{(i)})^{2}=1
\quad\text{and}\quad
\int_{K_i} \kappa (\xi_l^{(i)})^2 = 1.
\]
Notice that, the functions $u_1$ and $u_2$ belong to the snapshot spaces $V^{\text{snap}}_1$ and $V^{\text{snap}}_2$ respectively.
We will need two functions
$\widehat{u}_{1}$ and $\widehat{u}_{2}$, which belong to the offline spaces $V^{\text{off}}_1$ and $V^{\text{off}}_2$ respectively.
These functions are
defined by
\begin{equation*}
\widehat{u}_{1}=\sum_{l=1}^{l_{1.i}}c_{l}\phi_{l}^{(i)}
\quad\text{and}\quad
\widehat{u}_{2}=\sum_{l=1}^{l_{2.i}}d_{l}\xi_{l}^{(i)}
\quad\quad\text{on } K_i.
\end{equation*}
We remark that $\widehat{u}_{1}$ and $\widehat{u}_{2}$ are the truncation of $u_1$ and $u_2$
up to the eigenfunctions selected to form the offline space.

Next,
we will find an estimate of $\|u_{1}-\widehat{u}_{1}\|_{\text{DG}}$.
Let $K_i\in\mathcal{T}^H$ be a given coarse grid block. Recall that the spectral problem
to form $V^{i,\text{off}}_1$ is
\[
\int_{K_{i}}\kappa\nabla \phi^{(i)}_k \cdot\nabla v=\frac{\lambda_{1,k}^{(i)}}{H}\int_{\partial K_{i}}\widetilde{\kappa} {\phi^{(i)}_k}{v},\quad\quad\forall v\in V^{i,\text{snap}}_1.
\]
By the definition of the flux defined in (\ref{eq:flux}), the above spectral problem
can be represented as
\[
\int_{\partial K_i}(\kappa\nabla \phi^{(i)}_k \cdot n_{\partial K_i})v\; ds=\frac{\lambda_{1,k}^{(i)}}{H}\int_{\partial K_{i}}\widetilde{\kappa}{\phi^{(i)}_k}{v}.
\]
By the definition of the DG-norm, the error $\|u_{1}-\widehat{u}_{1}\|_{\text{DG}}$
can be estimated by
\[
\|\widehat{{u}}_{1}-{u}_{1}\|_{\text{DG}}^{2}\leq\sum_{i=1}^N
\Big(\int_{K_i}\kappa|\nabla(\widehat{{u}}_{1}-{u}_{1})|^{2}+\frac{\gamma}{h}\int_{\partial K_i}\widetilde{\kappa}(\widehat{{u}}_{1}-{u}_{1})^{2}\Big).
\]
Note that, by (\ref{eq:expansion}), we have
\[
\int_{K_{i}}\kappa|\nabla(\widehat{{u}}_{1}-{u}_{1})|^{2}=\sum_{l={l_{1,i}}+1}^{M_{i}}\frac{\lambda_{1,l}^{(i)}}{H}c_{l}^{2}\leq\frac{H}{\lambda_{1,l_{1,i}+1}^{{(i)}}}\sum_{l={l_{1,i}}+1}^{M_{i}}(\frac{\lambda_{1,l}^{{(i)}}}{H})^{2}c_{l}^{2}
\]
and
\[
\frac{1}{h}\int_{\partial K_{i}}\widetilde{\kappa}(\widehat{{u}}_{1}-{u}_{1}))^{2}=\frac{1}{h}\sum_{l={l_{1,i}}+1}^{M_{i}}c_{l}^{2}\leq\frac{H^{2}}{h(\lambda_{1,l_{1,i}+1}^{{(i)}})^{2}}\sum_{l={l_{1,i}}+1}^{M_{i}}(\frac{\lambda_{1,l}^{{(i)}}}{H})^{2}c_{l}^{2}.
\]
Furthermore,
\[
\sum_{l={l_{1,i}}+1}^{M_{i}}(\frac{\lambda_{1,l}^{{(i)}}}{H})^{2}c_{l}^{2}\leq\sum_{l=1}^{M_{i}}(\frac{\lambda_{1,l}^{{(i)}}}{H})^{2}c_{l}^{2} =  (\widetilde{\kappa})^{-1} \int_{\partial K_{i}}
(\kappa\nabla u_{1}\cdot n_{\partial K_i})^{2}.
\]
Consequently, we obtain the following bound
\[
\|u_{1}-\widehat{{u}}_{1}\|_{\text{DG}}^{2}\leq \sum_{i=1}^{N}\frac{H}{ \widetilde{\kappa} \lambda_{1,l_{1,i}+1}^{{(i)}}}(1+\frac{\gamma H}{h\lambda_{1,l_{1,i}+1}^{{(i)}}})
\int_{\partial K_{i}} (\kappa\nabla u_{1}\cdot n_{\partial K})^{2}.
\]

Next, we will find an estimate of $\|u_{2}-\widehat{u}_{2}\|_{\text{DG}}$.
By definition of the bilinear form $a_{DG}$,
\begin{equation*}
a_{DG}(u_{2},v)  =-a_{DG}(u_{1},v)+(f,v)
  =\sum_{E\in\mathcal{E}^{H}}\int_{E}\Big(\average{{\kappa}\nabla{v}\cdot{n}_{E}}\jump{u_{1}}\Big) + (f,v)
\end{equation*}
which holds for any $v \in V^{\text{snap}}_2$. In addition, by the fact that any function in $V^{\text{snap}}_2$ is zero on boundaries of coarse grid blocks,
we have
\begin{equation*}
\|u_{2}-\widehat{{u}}_{2}\|_{\text{DG}}^{2}  =a_{DG}(u_{2}-\widehat{{u}}_{2},u_{2}-\widehat{{u}}_{2})
  =a_{DG}(u_{2},u_{2}-\widehat{{u}}_{2}),
\end{equation*}
where the last equality follows from the fact that the eigenfunctions of (\ref{eq:spec-dg-1}) are $\kappa$-orthogonal on every coarse grid block.
Therefore we have
\begin{equation}
\|u_{2}-\widehat{{u}}_{2}\|_{\text{DG}}^{2}  =\sum_{E\in\mathcal{E}^{H}}\int_{E}\Big(\average{{\kappa}\nabla({u_{2}-\widehat{{u}}_{2}})\cdot{n}_{E}}\jump{u_{1}}\Big)+(f,u_{2}-\widehat{{u}}_{2}).
\label{eq:v2}
\end{equation}
The second term on the right hand side of (\ref{eq:v2}) can be estimated as
\begin{equation*}
(f, u_2 -\widehat{u}_2) \leq \sum_{K\in\mathcal{T}^H} \| f\|_{L^2(K)} \| \kappa^{\frac{1}{2}} (u_2 - \widehat{u}_2) \|_{L^2(K)}.
\end{equation*}
By (\ref{eq:spec-dg-1}), for every $K_i\in\mathcal{T}^H$, we have
\begin{equation*}
\int_{K_i}\kappa|(u_{2}-\widehat{{u}}_{2})|^{2}  =\sum_{l \geq l_{2.i}+1}d_{l}^{2}\leq \Big(\cfrac{H^{2}}{\lambda_{2,l_{2,i}+1}^{{(i)}}}\Big)\sum_{l=l_{2.i}+1}\cfrac{\lambda_{2,l}^{{(i)}}}{H^{2}}d_{l}^{2}
  =\cfrac{H^{2}}{\lambda_{2,l_{2,i}+1}^{{(i)}}}\int_{K_{i}}\kappa|\nabla(u_{2}-\widehat{{u}}_{2})|^{2}.
\end{equation*}
For the first term on the right hand side of (\ref{eq:v2}), we use inequality (\ref{eq:cont5}) to conclude that
\begin{equation*}
\sum_{E\in\mathcal{E}^{H}}\int_{E}\Big(\average{{\kappa}\nabla({u_{2}-\widehat{{u}}_{2}})\cdot{n}_{E}}\jump{u_{1}}\Big)
\leq C_{\text{inv}} \gamma^{-\frac{1}{2}} \| u_2 - \widehat{u}_2 \|_{\text{DG}} \Big( \sum_{E\in\mathcal{E}^H} \frac{\gamma}{h} \int_E \overline{\kappa} \jump{u_1}^2 \Big)^{\frac{1}{2}}.
\end{equation*}
Consequently, from (\ref{eq:v2}) and the fact that $\jump{u_1}=\jump{u_h}$ for all coarse edges, we obtain the following bound
\begin{equation*}
\|u_{2}-\widehat{{u}}_{2}\|_{\text{DG}}^{2}  \leq C \Big( C^2_{\text{inv}} \sum_{E\in\mathcal{E}^H} \frac{1}{h} \int_E \overline{\kappa} \jump{u_h}^2
+\sum_{K\in\mathcal{T}_{H}}\cfrac{H^{2}}{\lambda_{2,l_{2,i}+1}^{{(i)}}} \|f\|_{L^2(K)}^{2} \Big).
\end{equation*}

Finally, we will prove the required error bound. By coercivity,
\[
\begin{split}a_{0}\|\widehat{u}_{1}+\widehat{u}_{2}-u_{H}\|_{\text{DG}}^{2} & \leq a_{\text{DG}}(\widehat{u}_{1}+\widehat{u}_{2}-u_{H},\widehat{u}_{1}+\widehat{u}_{2}-u_{H})\\
 & =a_{\text{DG}}(\widehat{u}_{1}+\widehat{u}_{2}-u_{H},\widehat{u}_{1}+\widehat{u}_{2}-u_{h})+a_{\text{DG}}(\widehat{u}_{1}+\widehat{u}_{2}-u_{H},u_{h}-u_{H}).
\end{split}
\]
Note that $a_{\text{DG}}(\widehat{u}_{1}+\widehat{u}_{2}-u_{H},u_{h}-u_{H})=0$
since $\widehat{u}_{1}+\widehat{u}_{2}-u_{H}\in V^{\text{off}}$.
Using the above results,
\begin{equation*}
\begin{split}
&\: \|\widehat{u}_{1}+\widehat{u}_{2}-u_{H}\|_{\text{DG}}^{2} \\
\leq &\: C\Big( \sum_{i=1}^{N}\frac{H}{ \widetilde{\kappa} \lambda_{1,l_{1,i}+1}^{{(i)}}}(1+\frac{\gamma H}{h\lambda_{1,l_{1,i}+1}^{{(i)}}})
\int_{\partial K_{i}} (\kappa\nabla u_{1}\cdot n_{\partial K})^{2}
+\sum_{K\in\mathcal{T}_{H}}\cfrac{H^{2}}{\lambda_{2,l_{2,i}+1}^{{(i)}}} \|f\|_{L^2(K)}^{2}
+C^2_{\text{inv}} \sum_{E\in\mathcal{E}^H} \frac{1}{h} \int_E \overline{\kappa} \jump{u_h}^2
\Big).
\end{split}
\end{equation*}
The desired bound is then obtained by the triangle inequality
\[
\|u_{h}-u_{H}\|_{\text{DG}}\leq\|u_{h}-\widehat{u}\|_{\text{DG}}+\|\widehat{u}-u_{H}\|_{\text{DG}},
\]
where $\widehat{u} = \widehat{u}_1 + \widehat{u}_2$.
This completes the proof.

\begin{flushright}
$\square$
\par\end{flushright}


We remark that, the term
\begin{equation}
\sum_{i=1}^{N}\frac{H}{ \widetilde{\kappa} \lambda_{1,l_{1,i}+1}^{{(i)}}}(1+\frac{\gamma H}{h\lambda_{1,l_{1,i}+1}^{{(i)}}})
\int_{\partial K_{i}} (\kappa\nabla u_{1}\cdot n_{\partial K})^{2}
\label{eq:error1}
\end{equation}
corresponds to the error for the space $V^{\text{off}}_1$ and the term
\begin{equation*}
\sum_{K\in\mathcal{T}_{H}}\cfrac{H^{2}}{\lambda_{2,l_{2,i}+1}^{{(i)}}} \|f\|_{L^2(K)}^{2}
\end{equation*}
corresponds to the error for the space $V^{\text{off}}_2$.
Moreover, the term
\begin{equation*}
C^2_{\text{inv}} \sum_{E\in\mathcal{E}^H} \frac{1}{h} \int_E \overline{\kappa} \jump{u_h}^2
\end{equation*}
is the error in the fine grid solution $u_h$. This is the irreducible error,
and an estimate of this can be derived following standard DG frameworks.

\begin{table}[ht]
\begin{centering}
\begin{tabular}{|c|c|}
\hline
h & $\Lambda_{K}^{\text{snap}}$\tabularnewline
\hline
1/48 & 1.0021e+03\tabularnewline
\hline
1/96 & 1.0193e+03\tabularnewline
\hline
1/192 & 1.3094e+03\tabularnewline
\hline
\end{tabular}
\begin{tabular}{|c|c|}
\hline
h & $\Lambda_{K}^{\text{snap}}$\tabularnewline
\hline
1/48 & 7.7650e+05\tabularnewline
\hline
1/96 & 1.6569e+06\tabularnewline
\hline
1/192 & 3.3254e+06\tabularnewline
\hline
\end{tabular}
\par\end{centering}

\caption{Left: oversampling basis, Right: no-oversampling basis\label{tab:compare max eig }}

\label{table: eig max}
\end{table}

\begin{remark}
It is important to note that one can also replace (\ref{eq:maxeig}) by
\begin{equation}
\label{eq:largesteig}
a_{H}^{K}(\widehat{v},\widehat{v})\leq  \Lambda_K^{\text{snap}} \widetilde{\kappa} \int_{\partial K}|v|^{2},
\end{equation}
where $\Lambda_K^{\text{snap}}$ is the largest eigenvalue for the spectral problem (\ref{eq:spec-dg}).
Therefore, (\ref{eq:fluxbound}) can be replaced by
\begin{equation*}
\int_{\partial K}|{\kappa}\nabla{u}\cdot{n}_{\partial K}|^{2}\leq  \Lambda_K^{\text{snap}} \widetilde{\kappa}\, a_{H}^{K}(u,u).
\end{equation*}
By following the above steps, we see that one can choose $\gamma$ in (\ref{eq:error1}) so that
\begin{equation*}
\gamma > C_{\kappa} h \max_{K\subset\mathcal{T}^H} \Lambda_K^{\text{snap}}
\end{equation*}
where the constant $C_{\kappa}$ is defined as
\begin{equation*}
C_{\kappa} = \max_{K\subset\mathcal{T}^H} \frac{ \max_{E\subset\partial K} \overline{\kappa} }{ \min_{E\subset\partial K} \overline{\kappa}}.
\end{equation*}
We remark that this constant $C_{\kappa}$ is order one if we assume that every coarse element
has a high contrast region.

One can take smaller values of $\gamma$ if oversampling is used  (oversampling method is discussed in Section \ref{sec:numerresults}).
The main idea of the oversampling is to choose larger regions for
computing snapshot vectors.
For every coarse block $K_i$, we choose an enlarged region $K_i^+$,
and find oversampling snapshot functions $\psi_k^{i,\text{over}}$ by
solving (\ref{eq:overproblem1}). We have performed numerical
experiments and computed $\Lambda_K^{\text{snap}}$
with and without
oversampling. Denote
$\Lambda_{K^+}^{\text{snap}}$ to be the largest eigenvalue
corresponding to the oversampled problem.
In our numerical results (see Table \ref{table: eig max}), we have
removed  linearly dependent snapshot vectors
with respect to the inner product corresponding to
 $\int_{\partial K} |v|^2$ before computing the largest
eigenvalue. Our numerical results show that one
can have about three orders of magnitude smaller value for
$\Lambda_{K^+}^{\text{snap}}$ compared to
$\Lambda_{K}^{\text{snap}}$.
Moreover,
our numerical results show a weak $h$-dependence
for $\Lambda_{K^+}^{\text{snap}}$ as we decrease $h$, while
$\Lambda_{K}^{\text{snap}}$ behaves as $h^{-1}$
(when no-oversampling is used).

Our error analysis holds when oversampling snapshot space
is used.
The term in (\ref{eq:error1}) will become
\begin{equation}
\sum_{i=1}^{N}\frac{H}{ \widetilde{\kappa} \lambda_{1,l_{1,i}+1}^{{(i)}}}(1+\frac{\alpha C_{\kappa} \max_{K\subset\mathcal{T}^H} \Lambda_{K^+}^{\text{snap}}H}{\lambda_{1,l_{1,i}+1}^{{(i)}}})
\int_{\partial K_{i}} (\kappa\nabla u_{1}\cdot n_{\partial K})^{2}
\end{equation}
when $\gamma=\alpha C_{\kappa} h \max_{K\subset\mathcal{T}^H} \Lambda_{K^+}^{\text{snap}}$. If $\Lambda_{K^+}^{\text{snap}}$ is
a weak function of $h$, e.g., if it is bounded with respect to
$h$, then the terms involving
$\Lambda_{K^+}^{\text{snap}}$ doesnt influence the error
and the error is dominated by the first term.
We emphasize that our discussions in this Remark
 are based on our numerical
studies and their analytical studies are difficult because
it requires interior estimates for solutions. We plan to
study them in future.

\end{remark}

\section{A-posteriori error estimate and adaptive enrichment}

\label{sec:errorindicator}

In this section, we will derive an a-posteriori error indicator for
the error $u_h-u_H$ in energy norm. We will then use the
error indicator to develop an adaptive enrichment algorithm. The a-posteriori
error indicator gives an estimate of the local error on the coarse
grid blocks $K_{i}$, and we can then add basis functions to
improve the solution.
Our indicator consists of two components,
which correspond to the errors made in the spaces $V^{\text{snap}}_1$ and $V^{\text{snap}}_2$.
By using the indicator, one can determine adaptively which space has to be enriched.
This section is devoted to the description of the a-posteriori
error indicator and the corresponding adaptive enrichment algorithm.
The convergence analysis of the method will be given in the Section \ref{sec:analysis}.

Recall that $V^h_{\text{DG}}$ is the fine scale DG finite element space,
and the fine scale solution $u_h$ satisfies
\begin{equation}
a_{\text{DG}}(u_h,v)=(f,v)\quad\text{for all}\,\,\, v\in V^h_{\text{DG}}\label{eq:fine}.
\end{equation}
Moreover, the GMsDGM solution $u_H$ satisfies
\begin{equation}
a_{\text{DG}}(u_H,v)=(f,v)\quad\text{for all}\,\,\, v\in V^{\text{off}}.\label{eq:coarse}
\end{equation}
We remark that $V^{\text{off}}\subset V_{\text{DG}}^h$. Next we will give the definitions
of the residuals.

\textbf{Definitions of residuals}:

Let $K_{i}$ be a given coarse grid block.
We will define two residuals corresponding to the two types of snapshot spaces.
First, on the space $V^{i,\text{snap}}_1$, we define the following linear functional
\begin{equation}
R_{1,i}(v)=\int_{K_{i}}fv - a_{\text{DG}}(u_{H},v), \quad v\in V^{i,\text{snap}}_1.
\end{equation}
Similarly,
on the space $V^{i,\text{snap}}_2$, we define the following linear functional
\begin{equation}
R_{2,i}(v)=\int_{K_{i}}fv - a_{\text{DG}}(u_{H},v), \quad v\in V^{i,\text{snap}}_2.
\end{equation}
These residuals measure how well the solution $u_H$ satisfies the fine-scale equation (\ref{eq:fine}).
Furthermore, on the snapshot spaces $V^{i,\text{snap}}_1$ and $V^{i,\text{snap}}_2$, we define
the following norms
\begin{equation}
\|v\|_{V_{1}(K_{i})}^{2}  =H^{-1}\int_{\partial K_{i}}\widetilde{\kappa}v^{2}
\quad\text{and}\quad
\|v\|_{V_{2}(K_{i})}^{2}  =H^{-2}\int_{K_{i}}\kappa v^{2}
\end{equation}
respectively.
The norms of the linear functionals $R_{1,i}$ and $R_{2,i}$ are defined in the standard way, namely
\begin{equation}
\|R_{1,i}\|=\sup_{v\in V^{i,\text{snap}}_1} \frac{|R_{1,i}(v)|}{\|v\|_{V_{1}(K_{i})}}
\quad\text{and}\quad
\|R_{2,i}\|=\sup_{v\in V^{i,\text{snap}}_2} \frac{|R_{2,i}(v)|}{\|v\|_{V_{2}(K_{i})}}.
\end{equation}
The norms $\|R_{1,i}\|$ and $\|R_{2,i}\|$ give estimates on the sizes of fine-scale residual errors
with respect to the spaces $V^{i,\text{snap}}_1$ and $V^{i,\text{snap}}_2$.


We recall that, for each coarse grid block $K_{i}$, the eigenfunctions of the spectral problem (\ref{eq:spec-dg})
corresponding
to the eigenvalues $\lambda_{1,1}^{{(i)}},\cdots,\lambda_{1,l_{1,i}}^{{(i)}}$ and the eigenfunctions of the spectral problem (\ref{eq:spec-dg-1})
corresponding to the eigenvalues
$\lambda_{2,1}^{{(i)}},\cdots,\lambda_{2,l_{2,i}}^{{(i)}}$ are used
in the construction of $V^{\text{off}}$.
In addition, the energy error in this section and Section \ref{sec:analysis}
is measured by $\|u\|_{a}^2 = a_{\text{DG}}(u,u)$,
which is equivalent to the DG norm.

In Section \ref{sec:analysis}, we will prove the following theorem,
and we see that the norms $\|R_{j,i}\|$ give indications
on the size of the energy norm error $\|u_h-u_H\|_{a}$.
\begin{theorem}
Let $u_h$ and $u_{H}$ be the solutions of \eqref{eq:fine}
and \eqref{eq:coarse} respectively. Then
\begin{eqnarray}
\|u_h-u_{H}\|_{a}^{2} & \leq & C_{\text{err}}\sum_{i=1}^{N}\sum_{j=1}^{2}\|R_{j,i}\|^{2}(\lambda_{j,l_{j,i}+1}^{{(i)}})^{-1}.\label{eq:res2}
\end{eqnarray}
\label{thm:post} where $C_{\text{err}}$ is a uniform constant. \end{theorem}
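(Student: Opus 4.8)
The plan is to estimate the energy error $\|u_h - u_H\|_a$ by a duality/Galerkin-orthogonality argument, exploiting the decomposition $V_h(K_i) = V^{i,\text{snap}}_1 + V^{i,\text{snap}}_2$ that underlies the whole construction. First I would set $e = u_h - u_H \in V^h_{\text{DG}}$. By the coercivity of $a_{\text{DG}}$ (the Lemma) it suffices to control $a_{\text{DG}}(e,e)$, and by Galerkin orthogonality $a_{\text{DG}}(e,v) = 0$ for all $v \in V^{\text{off}}$, so we may subtract off an arbitrary offline interpolant. The natural choice is to split $e$ globally: on each coarse block $K_i$ write the fine-scale error-carrying function as a harmonic part plus a bubble part, and then subtract the spectral truncations $\widehat{u}_1, \widehat{u}_2$ from the a-priori proof. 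Concretely, using $a_{\text{DG}}(e,e) = a_{\text{DG}}(e, u_h - (\widehat{u}_1+\widehat{u}_2))$ and then expressing $u_h - \widehat u = (u_1 - \widehat u_1) + (u_2 - \widehat u_2)$ plus the harmonic/bubble interpolation error of $u_h$ itself, the right-hand side becomes a sum of terms of the form $a_{\text{DG}}(e, w_{j,i})$ where $w_{1,i}$ lives in $V^{i,\text{snap}}_1$ (mod offline) and $w_{2,i}$ lives in $V^{i,\text{snap}}_2$ (mod offline).

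The key step is to recognize each such term as $R_{j,i}$ evaluated at the relevant snapshot function. Indeed, since $u_h$ solves the fine equation, $a_{\text{DG}}(e,v) = a_{\text{DG}}(u_h,v) - a_{\text{DG}}(u_H,v) = (f,v) - a_{\text{DG}}(u_H,v)$; when $v$ is supported on $K_i$ (as bubble functions in $V^{i,\text{snap}}_2$ are) this is exactly $R_{2,i}(v)$, and when $v$ is the harmonic piece associated to $K_i$ the same computation — after noting $(f,v)|_{K_i} = \int_{K_i} f v$ — yields $R_{1,i}(v)$. Hence $a_{\text{DG}}(e, w_{j,i}) = R_{j,i}(w_{j,i}) \le \|R_{j,i}\|\,\|w_{j,i}\|_{V_j(K_i)}$. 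The remaining ingredient is the spectral estimate on the interpolation error: exactly as in the a-priori proof, the $\kappa$-orthonormal (resp. boundary-$\widetilde\kappa$-orthonormal) eigenexpansion gives
\begin{equation*}
\|w_{1,i}\|_{V_1(K_i)}^2 \le C\,(\lambda_{1,l_{1,i}+1}^{(i)})^{-1} a_H^{K_i}(w_{1,i},w_{1,i}), \qquad
\|w_{2,i}\|_{V_2(K_i)}^2 \le C\,(\lambda_{2,l_{2,i}+1}^{(i)})^{-1} a_H^{K_i}(w_{2,i},w_{2,i}),
\end{equation*}
because the eigenvalue ratios $\lambda_{j,l}^{(i)}/\lambda_{j,l_{j,i}+1}^{(i)} \ge 1$ for the discarded modes. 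Combining, $a_{\text{DG}}(e, w_{j,i}) \le C\,\|R_{j,i}\|\,(\lambda_{j,l_{j,i}+1}^{(i)})^{-1/2}\,\|w_{j,i}\|_a$, and then $\|w_{j,i}\|_a \le C\|e\|_a$ after a Cauchy--Schwarz over $i$ and $j$ (using finite overlap of the local pieces and the DG-norm stability of the harmonic/bubble splitting). Dividing through by $\|e\|_a$ gives \eqref{eq:res2}.

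The main obstacle I anticipate is the bookkeeping on the jump terms: the functions $w_{1,i}$ built from harmonic extensions of $u_h$ are not continuous across coarse edges, so $a_{\text{DG}}(e, w_{1,i})$ carries the consistency/penalty cross-terms $\int_E \average{\kappa\nabla(\cdot)\cdot n_E}\jump{\cdot}$. One must either absorb these using \eqref{eq:cont5} and the flux bound \eqref{eq:fluxbound} (at the cost of the $C_{\text{inv}}^2$ and $\gamma$ constants folded into $C_{\text{err}}$), or — more cleanly — choose the offline interpolant of $u_h$ so that it matches $u_h$ on all coarse edges, making the subtracted pieces genuinely interior/edge-vanishing and killing the cross-terms. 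Getting the norm $\|v\|_{V_1(K_i)}$ (which only sees $\partial K_i$) to simultaneously control both the volumetric energy of $w_{1,i}$ and its jump contribution is precisely where the factor $\overline\kappa \sim \widetilde\kappa$ matching and the penalty scaling $\gamma/h$ must be tracked carefully; this is the technical heart of Section~\ref{sec:analysis}. The bubble part is comparatively easy since those functions have no jumps and the residual $R_{2,i}$ is genuinely local.
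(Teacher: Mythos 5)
Your overall architecture is the right one and matches the paper's: use Galerkin orthogonality to subtract an offline projection, localize into the two snapshot components on each $K_i$, recognize each local term as $R_{j,i}$ applied to a snapshot function orthogonal to the offline space, and convert $\|\cdot\|_{V_j(K_i)}$ into local energy via the spectral gap $(\lambda_{j,l_{j,i}+1}^{(i)})^{-1}$ exactly as in \eqref{eq:eigenbound}. However, there is a genuine gap in your closing step, caused by your choice of \emph{which} function to decompose. You write $a_{\text{DG}}(e,e)=a_{\text{DG}}(e,u_h-\widehat{u})$ and split $u_h-\widehat{u}$ (the spectral truncation error of $u_h$) into the pieces $w_{j,i}$, so your final Cauchy--Schwarz requires $\sum_{i,j}\|w_{j,i}\|_a^2\leq C\|e\|_a^2$, i.e.\ $\|u_h-\widehat{u}\|_a\leq C\|u_h-u_H\|_a$. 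This inequality goes the wrong way: since $a_{\text{DG}}$ is symmetric and coercive, the Galerkin solution is (quasi-)optimal, so $\|e\|_a\leq C\|u_h-\widehat{u}\|_a$ while the ratio $\|u_h-\widehat{u}\|_a/\|e\|_a$ can be arbitrarily large (the Galerkin solution may be far better than the naive eigenfunction truncation). The argument therefore does not close as written.

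The fix is the one the paper uses: in the error representation $a_{\text{DG}}(e,v)=(f,v-\Pi v)-a_{\text{DG}}(u_H,v-\Pi v)$, take $v=e$ and decompose the \emph{test function} $e$ itself, $e|_{K_i}=v_1^{(i)}+v_2^{(i)}$ with $v_1^{(i)}$ the $\kappa$-harmonic extension of $e|_{\partial K_i}$ and $v_2^{(i)}\in V_h^0(K_i)$. Then $\|e\|_a^2=\sum_{i,j}R_{j,i}(v_j^{(i)}-P_{j,i}v_j^{(i)})\leq\sum_{i,j}\|R_{j,i}\|\,(\lambda_{j,l_{j,i}+1}^{(i)})^{-1/2}\bigl(\int_{K_i}\kappa|\nabla v_j^{(i)}|^2\bigr)^{1/2}$, and the crucial point is that the harmonic/bubble splitting is orthogonal in the local Dirichlet form, so $\sum_{j}\int_{K_i}\kappa|\nabla v_j^{(i)}|^2=\int_{K_i}\kappa|\nabla e|^2$ and the last factor sums to $a_H(e,e)\leq\|e\|_{\text{DG}}^2\leq C\|e\|_a^2$; dividing by $\|e\|_a$ gives \eqref{eq:res2}. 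Two smaller remarks: your concern about the consistency/penalty cross terms is moot for reliability, because $R_{j,i}(v)=\int_{K_i}fv-a_{\text{DG}}(u_H,v)$ is defined with $v$ extended by zero outside $K_i$, so the edge terms are already absorbed into $R_{j,i}$ and its dual norm (this is exactly \eqref{eq:err2}); and your ``cleaner'' alternative of choosing an offline interpolant matching $u_h$ on all coarse edges is not available, since the trace of $u_h$ on $\partial K_i$ generally does not lie in the span of the selected eigenfunction traces.
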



We will now present the adaptive enrichment algorithm. We use $m\geq1$
to represent the enrichment level, $V^{\text{off}}(m)$ to represent the
solution space at level $m$ and $u_H^m$ to represent the GMsDGM solution at the enrichment level $m$.
For each coarse grid block $K_i$, we use $l_{j,i}^{m}$
to represent the number of eigenfunctions in $V^{i,\text{off}}_j$ used at the
enrichment level $m$ for the coarse region $K_{i}$.
Assume that the initial offline space $V^{\text{off}}(0)$
is given.

\textbf{Adaptive enrichment algorithm}: Choose $0<\theta<1$. For
each $m=0,1,\cdots$,
\begin{enumerate}
\item Step 1: Find the solution in the current space. That is, find $u_{H}^{m}\in V^{\text{off}}(m)$
such that
\begin{equation}
a_{\text{DG}}(u_{H}^{m},v)=(f,v)\quad\text{for all}\,\,\, v\in V^{\text{off}}(m).\label{eq:solve}
\end{equation}

\item Step 2: Compute the local residuals. For each coarse grid block $K_{i}$,
we compute
\[
\eta_{j,i}^{2}=\|R_{j,i}\|^{2}(\lambda_{j,l_{j,i}^m+1}^{{(i)}})^{-1}, \quad j=1,2.
\]
Then we
re-enumerate the above $2N$ residuals in the decreasing order, that is, $\eta_{1}^{2}\geq\eta_{2}^{2}\geq\cdots\geq\eta_{2N}^{2}$,
where we adopted single index notations.

\item Step 3: Find the coarse grid blocks and spaces where enrichment is needed. We choose
the smallest integer $k$ such that
\begin{equation}
\theta\sum_{J=1}^{2N}\eta_{J}^{2}\leq\sum_{J=1}^{k}\eta_{J}^{2}.\label{eq:criteria}
\end{equation}

\item Step 4: Enrich the space. For each $J=1,2,\cdots,k$, we add basis
function in $V^{i,\text{off}}_j$ according to the following rule. Let $s$
be the smallest positive integer such that $\lambda_{j,l^m_{j,i}+s+1}^{(i)}$
is large enough (see the proof of Theorem \ref{thm:conv}) compared
with $\lambda_{j,l_{j,i}^m+1}^{(i)}$. Then we include the eigenfunctions
in the construction of the basis functions. The resulting space is
denoted as $V^{\text{off}}(m+1)$.
\end{enumerate}
We remark that the choice of $s$ above will ensure the convergence
of the enrichment algorithm, and in practice, the value of $s$ is
easy to obtain. Moreover, contrary to classical adaptive refinement
methods, the total number of basis functions that we can add is bounded
by the dimension of the snapshot space. Thus, the condition \eqref{eq:criteria}
can be modified as follows. We choose the smallest integer $k$ such
that
\[
\theta\sum_{J=1}^{2N}\eta_{J}^{2}\leq\sum_{J\in I}\eta_{J}^{2},
\]
where the index set $I$ is a subset of $\{1,2,\cdots,k\}$.



Finally, we state the convergence theorem.
\begin{theorem} There
are positive constants $\delta,\rho$ and $L_m$ such
that the following contracting property holds
\[
\|u-u_{H}^{m+1}\|_{a}^{2}+\cfrac{1}{\delta L_{m+1}}\sum_{J=1}^{2N}(S_{J}^{m+1})^{2}\leq\varepsilon\Big(\|u-u_{H}^{m}\|_{a}^{2}+\cfrac{1}{\delta L_{m}}\sum_{J=1}^{N}(S_{J}^{m})^{2}\Big).
\]
\label{thm:conv} Note that $0<\varepsilon<1$ and
\[
\varepsilon=1-\cfrac{\theta^2(1-\rho L_m/L_{m+1})}{\theta^2 + C_{\text{err}}\delta L_{m}}.
\]
\end{theorem}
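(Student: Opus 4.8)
\textit{Proof plan.}
The plan is to run the usual contraction argument for adaptive algorithms (\cite{AdaptiveFEM,Adaptive-GMsFEM}): combine a Pythagoras identity for the energy error with an estimator--reduction inequality and tie the two together by monitoring the quantity $e_m+\tfrac1{\delta L_m}\mathcal S_m$, where $e_m:=\|u-u_H^m\|_a^2$, $\|v\|_a^2:=a_{\text{DG}}(v,v)$, and $\mathcal S_m:=\sum_J(S_J^m)^2$ is the level-$m$ residual estimator of Theorem~\ref{thm:post}. Two features are specific to GMsDGM and must be dealt with carefully. First, each step enriches only finitely many eigenmodes on a marked coarse block, so the \emph{local} reduction of the indicator is governed by the eigenvalue ratio $\lambda_{j,l_{j,i}^m+1}^{(i)}/\lambda_{j,l_{j,i}^m+s+1}^{(i)}$; the integer $s$ in Step~4 is chosen precisely to make the new eigenvalue large, which is what yields a reduction factor $\le\rho L_m/L_{m+1}<1$ and dictates $L_{m+1}>\rho L_m$. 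Second, the symmetric interior--penalty form couples neighbouring coarse blocks through the average/jump terms, so the residual and its perturbation estimates are genuinely non-local.

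First I would note that $a_{\text{DG}}$ is symmetric and, by the continuity--coercivity lemma, that $\|\cdot\|_a$ is an inner-product norm equivalent to $\|\cdot\|_{\text{DG}}$. Because $V^{\text{off}}(m)\subset V^{\text{off}}(m+1)\subset V_{\text{DG}}^h$, both $u_H^m$ and $u_H^{m+1}$ are $a_{\text{DG}}$-orthogonal projections of the fine-grid solution $u=u_h$ onto nested subspaces, whence $a_{\text{DG}}(u-u_H^{m+1},u_H^{m+1}-u_H^m)=0$ and
\[
e_{m+1}=e_m-\|u_H^{m+1}-u_H^m\|_a^2 .
\]
The core step is the estimator--reduction bound
\[
\mathcal S_{m+1}\le(1+\beta)\bigl(1-\theta(1-\rho L_m/L_{m+1})\bigr)\mathcal S_m+c_2\,\|u_H^{m+1}-u_H^m\|_a^2 ,
\]
which I would prove in two parts. (i) For a marked pair $(j,i)$, the Galerkin property at level $m$ forces the functional $R_{j,i}^m$ to vanish on the eigenfunctions already in use; expanding its Riesz representative with respect to $\|\cdot\|_{V_j(K_i)}$ in the eigenbasis of \eqref{eq:spec-dg} or \eqref{eq:spec-dg-1} and discarding the $s$ modes added in Step~4 shows that the surviving part of the residual is no larger than $R_{j,i}^m$ (up to the perturbation in (ii)), while the controlling eigenvalue jumps from $\lambda_{j,l_{j,i}^m+1}^{(i)}$ to $\lambda_{j,l_{j,i}^m+s+1}^{(i)}$; together with the D\"orfler property \eqref{eq:criteria} this gives the displayed contraction of the marked part of $\mathcal S_m$. (ii) On every coarse block one needs the continuity/perturbation estimate
\[
|R_{j,i}^{m+1}(v)-R_{j,i}^m(v)|=|a_{\text{DG}}(u_H^{m+1}-u_H^m,v)|\le c\,\|u_H^{m+1}-u_H^m\|_a\,\|v\|_{V_j(K_i)} ,
\]
which for $j=1$ uses the flux representation \eqref{eq:flux} and the inverse-inequality bound \eqref{eq:fluxbound} (since $v$ is a harmonic extension) and for $j=2$ an inverse inequality on $K_i$, while in both cases the edge contributions of $a_{\text{DG}}$ have to be tracked; squaring, summing over modes and blocks, and applying this with $(a+b)^2\le(1+\beta)a^2+(1+\beta^{-1})b^2$ produces the factor $(1+\beta)$ and the constant $c_2$.

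With these in hand the conclusion is routine bookkeeping: multiply the estimator--reduction bound by $(\delta L_{m+1})^{-1}$, add the Pythagoras identity, choose $\delta$ large enough (using that the $L_m$ stay bounded away from $0$) that the two $\|u_H^{m+1}-u_H^m\|_a^2$ contributions cancel, split off a fraction of $e_m$ and control it by $\mathcal S_m$ through the reliability estimate of Theorem~\ref{thm:post} (this is where $C_{\text{err}}\delta L_m$ enters the denominator of $\varepsilon$), and optimize over the auxiliary parameters $\beta$ and the splitting fraction to obtain the stated $\varepsilon<1$. I expect the main obstacle to be parts (i)--(ii) of the estimator--reduction bound: making the eigenvalue-ratio reduction mechanism rigorous for a genuinely finite enrichment, and simultaneously bounding -- in the $\|\cdot\|_{V_j(K_i)}$-dual norm -- the non-local perturbation of the residual functionals caused by the global update $u_H^m\to u_H^{m+1}$ through the DG jump terms; everything downstream of that is the standard summing-up argument.
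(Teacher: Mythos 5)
Your plan is correct and follows essentially the same route as the paper: Galerkin/Pythagoras orthogonality for the energy error, a local estimator-reduction lemma in which the residual functional is invariant on the newly added eigenmodes so that the weight drops by the eigenvalue ratio $\lambda_{j,l_{j,i}^m+1}^{(i)}/\lambda_{j,l_{j,i}^{m+1}+1}^{(i)}$ while the update $u_H^m\to u_H^{m+1}$ contributes a perturbation controlled in the $\|\cdot\|_{V_j(K_i)}$-dual norm via the stability bound \eqref{eq:eigenstab}, followed by D\"orfler marking, the reliability estimate of Theorem \ref{thm:post}, and the standard bookkeeping with the weighted sum $e_m+\tfrac{1}{\delta L_m}\sum_J(S_J^m)^2$. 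The only differences are cosmetic (your $\theta$ versus the paper's $\theta^2$ in the marking inequality, and deriving the perturbation continuity constant from inverse inequalities rather than directly from the largest eigenvalue $\Lambda_{j,i}$), so no substantive gap remains.
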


We remark that the precise definitions of $S_J^m$ as well as
the constants $\delta,\rho$
and $L_m$ are given in Section \ref{sec:analysis}.

\section{Numerical Results}
\label{sec:numerresults}

In this section, we will present some numerical examples to demonstrate
the performance of the adaptive
enrichment algorithm. The domain $\Omega$ is taken as the unit square $[0,1]^2$ and is divided
into $16\times 16$ coarse blocks consisting of uniform squares. Each coarse block is then divided into
$32 \times 32$ fine blocks consisting of uniform squares. Consequently, the whole
domain is partitioned by a $512 \times 512$ fine grid blocks. We will use
the following error quantities to compare the accuracy of our algorithm
\[
e_{2}=\cfrac{\|u_{H}-u_{h}\|_{L^{2}(\Omega)}}{\|u_{h}\|_{L^{2}(\Omega)}},\; e_{a}=\cfrac{\sqrt{a_{\text{DG}}(u_{H}-u_{h},u_{H}-u_{h})}}{\sqrt{a_{\text{DG}}(u_{h},u_{h})}}
\]
\[
e_{2}^{\text{snap}}=\cfrac{\|u_{H}-u_{\text{snap}}\|_{L^{2}(\Omega)}}{\|u_{\text{snap}}\|_{L^{2}(\Omega)}},\; e_{a}^{\text{snap}}=\cfrac{\sqrt{a_{\text{DG}}(u_{H}-u_{\text{snap}},u_{H}-u_{\text{snap}})}}{\sqrt{a_{\text{DG}}(u_{\text{snap}},u_{\text{snap}})}}
\]
where $u_{H}$ and $u_{h}$  are the GMsDGM and the
fine grid solutions respectively.
Moreover, $u_{\text{snap}}$ is the snapshot solution obtained by using all snapshot functions
generated by an oversampling strategy, see below.

We consider the permeability field $\kappa$ which is shown in Figure~\ref{fig:kappa}.
The boundary condition is set to be bi-linear, $g=x_1x_2$.
We will consider two examples with
two different source functions $f$. We will compare
the result of $V_{1}$ enrichment, $V_{1}-V_{2}$ enrichment, oversampling
basis enrichment, uniform enrichment and the exact indicator enrichment.
The following gives the details of these enrichments.

\begin{itemize}
\item \textbf{$V_{1}$ enrichment}: We use the error indicator, $\eta_{1,i}^{2}$
to perform the adaptive algorithm by enriching the basis functions
in $V_{1}$ space only, that is, basis functions obtained by the first spectral problem (\ref{eq:spec-dg}).
We use $4$ basis functions from (\ref{eq:spec-dg})
and zero basis function from (\ref{eq:spec-dg-1}) in the initial
step.
\item \textbf{$V_{1}-V_{2}$ enrichment}: We use both the error indicators, $\eta_{1,i}^{2},\eta_{2,i}^{2}$
to perform the adaptive algorithm by enriching the basis functions
in both $V_{1}$ and $V_{2}$ spaces, that is, basis functions from both spectral problems (\ref{eq:spec-dg}) and (\ref{eq:spec-dg-1}).
We use $4$ basis functions from (\ref{eq:spec-dg})
and zero basis function from (\ref{eq:spec-dg-1}) in the initial step.
\item \textbf{Oversampling enrichment:}
For every coarse block $K_i$, we choose an enlarged region $K_i^+$
(in the examples presented below, we enlarge the coarse block in each direction by a length $H$, that is $K_i^+$ is a 3$\times$3 coarse blocks
with $K_i$ at the center).
Then we find oversampling snapshot functions
${\psi}_{k}^{i,\text{over}} \in V_h(K_i^+)$ by solving
\begin{equation}
\begin{split}\int_{K_{i}}\kappa\nabla\psi_{k}^{i,\text{over}} \cdot\nabla v & ={0},\quad\text{\ensuremath{\forall}}\; v\in V^0_{h}(K_{i}^+), \\
\psi_{k}^{i,\text{over}} & =\delta_{k},\quad\text{ on }\;\partial K_{i}^+.
\end{split}
\label{eq:overproblem1}
\end{equation}
The linear span of these snapshot functions is called $V^{i,\text{over}}$.
Then we choose $40$ dominant oversampling basis functions by POD method. Specifically, we solve the following eigenvalue problem
    \[
\int_{K_i}\psi^{i,\text{over}}_{k}v=\lambda^{i}_{k}\int_{\partial K^{+}_i}\psi^{i,\text{over}}_{k}v,\quad\forall v\in V^{i,\text{over}}.
\] and choose the first $40$ eigenfunctions with largest eigenvalues.
Then we use these $40$ functions as boundary conditions in (\ref{eq:snapproblem1})
and repeat the remaining construction of the offline space.

\item \textbf{Uniform} \textbf{enrichment}: We enrich the basis functions
in $V_{1}$ space uniformly with $4$ basis functions from the $V_1$ space in the
initial step.
\item \textbf{The exact indicator} \textbf{enrichment}: We use the exact
error as the error indicator to perform the adaptive algorithm by
enriching the basis functions in $V_{1}$ space only with $4$
basis functions in the space $V_1$ in the initial step.
Here, the exact error is defined as $\|u - u_H\|_a$.
\end{itemize}

\begin{figure}[ht]
\begin{centering}
\includegraphics[scale=0.4]{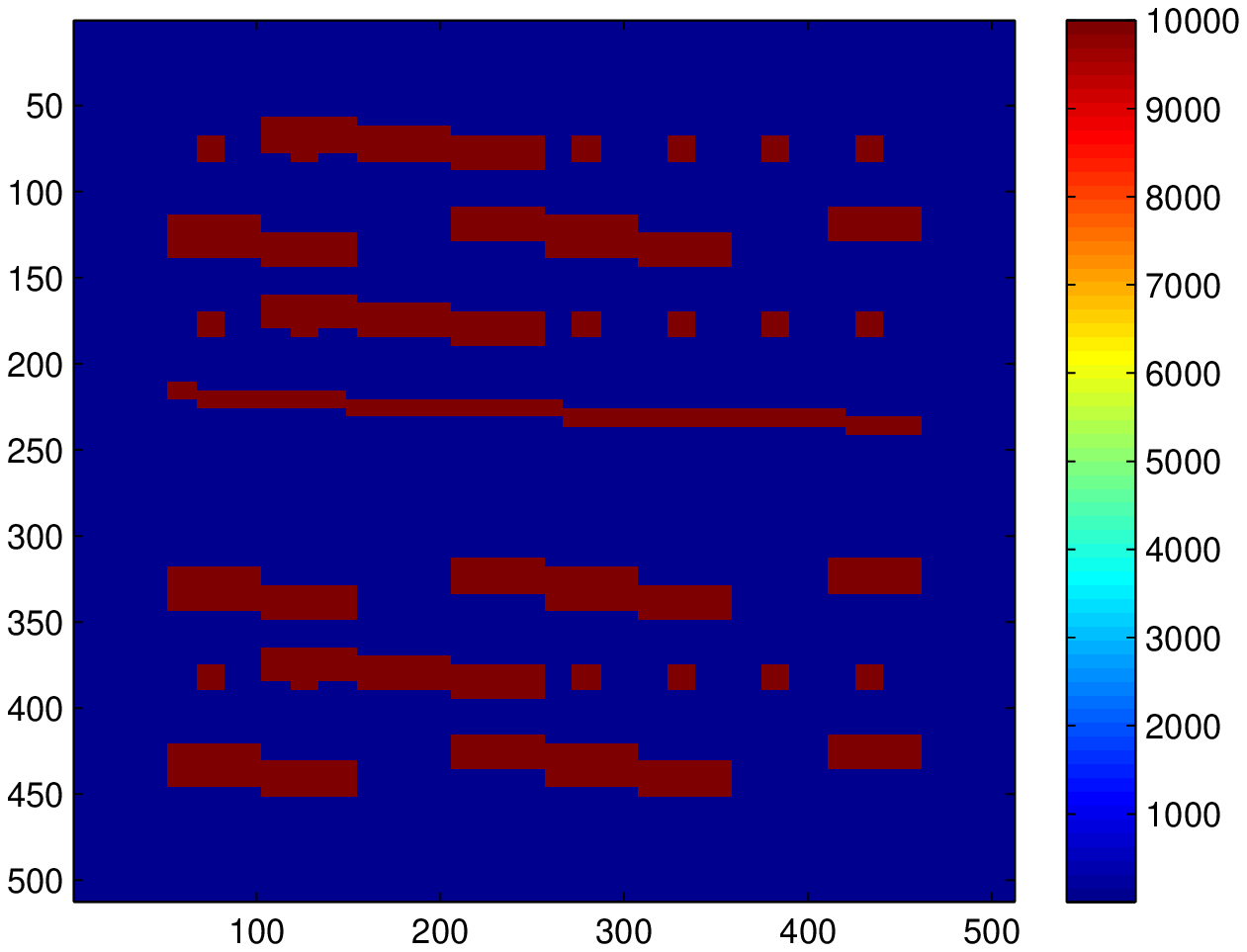}
\includegraphics[scale=0.4]{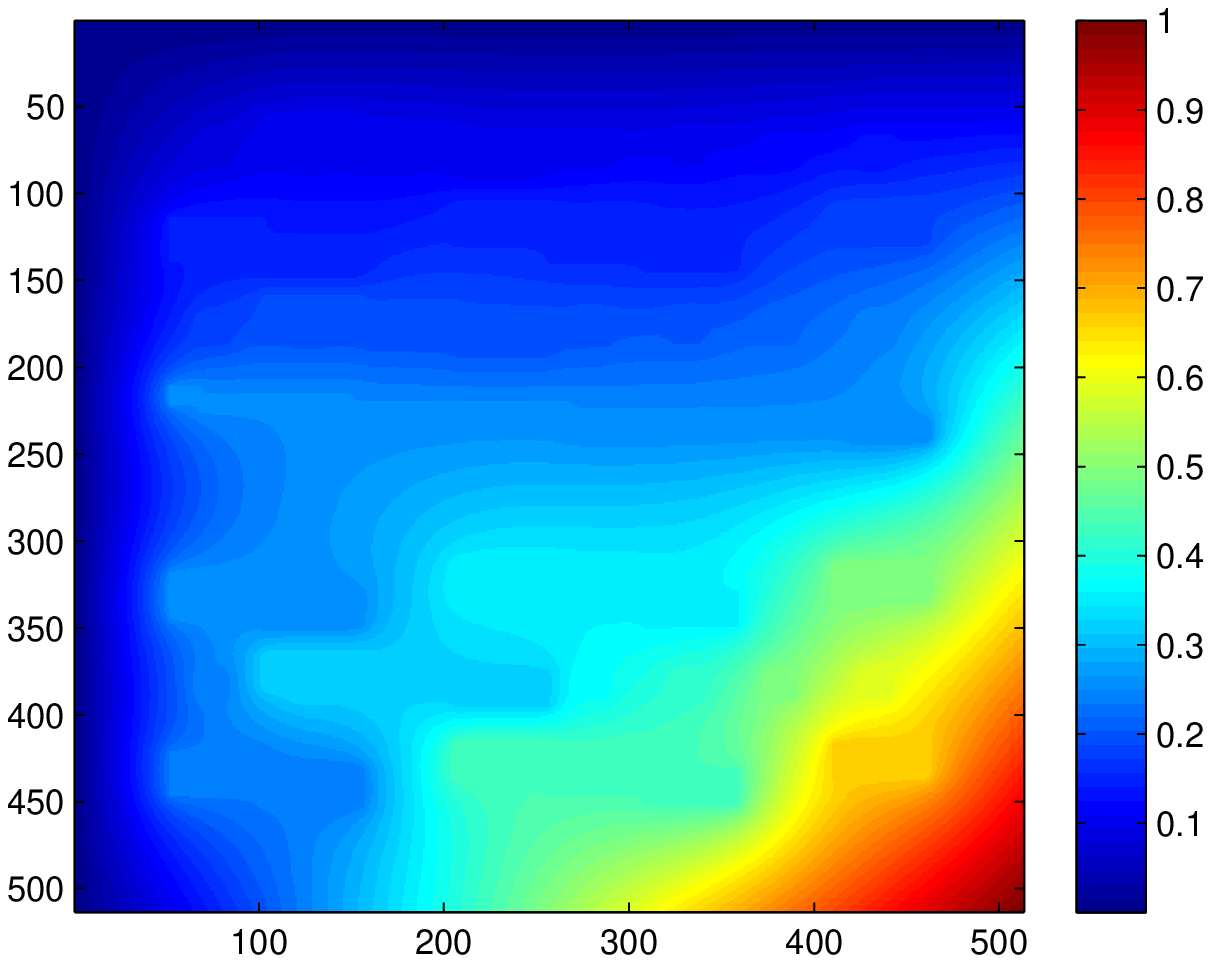}
\par\end{centering}

\caption{Left: Permeability field $\kappa$. Right: Fine grid solution with $f=1$.}
\label{fig:kappa}
\end{figure}

\subsubsection*{Example 1}
In our first example, we take the source function $f=1$.
The fine grid solution is shown in Figure~\ref{fig:kappa}. In Table
\ref{table: case1 V1} and Table \ref{table: case1 oversampling},
we present the convergence history of our algorithm for enriching in $V_{1}$
space only, enriching in both $V_{1}$ and $V_{2}$ spaces and enriching
by the oversampling basis functions.
We remark that, in the presentation of our results, DOF means
the total number of basis functions used in the whole domain.
We see from Table \ref{table: case1 V1} that the behaviour of enriching in $V_1$ space only
and enriching in both $V_1$ and $V_2$ spaces are similar.
The is due to the fact that the source function $f$ is a constant function,
and the space $V_2$ will not help to improve the solution.
This is in consistent with classical theory that basis functions obtained by harmonic extensions are
good enough to approximate the solution.
In Table \ref{table: case1 oversampling}, the convergence behaviour is shown for the oversampling case,
and we see again that a clear convergence is obtained.
For this case, we use $40$ snapshot basis functions per coarse grid block
giving a total DOF of $10240$,
and the corresponding snapshot errors (that is, the difference between the solution obtained by these $10240$ basis functions
and the solution $u_h$)
of $4.5195 \times 10^{-4}$ and $9.8935 \times 10^{-4}$
in relative $L^2$ norm and relative $a$-norm respectively.
In addition, we observe that the oversampling basis provides more efficient representation of the solution
than the non-oversampling basis.
To further demonstrate the
efficiency of our algorithm, we compare our result with the uniform
enrichment scheme. The convergence history for using uniform enrichment is shown
in Table \ref{table: case1 oversampling},
and we see that our adaptive enrichment algorithm performs much better than uniform enrichment.
Finally, a comparison among all the above cases and the enrichment by exact error
is shown in Figure \ref{fig: case1 error plot},
in which the energy error is plotted against DOF.
From the figure, we clearly see that our enrichment algorithm
performs much better than uniform enrichment.
Moreover, our enrichment algorithm performs equally well
compared with enrichment by the exact error. This shows that
our indicator is both reliable and efficient.





\begin{table}[ht]
\begin{centering}
\begin{tabular}{|c|c|c|}
\hline
DOF  & $e_{2}$ & $e_{a}$\tabularnewline
\hline
1024  & 0.1082  & 0.0479\tabularnewline
\hline
1769  & 0.0456  & 0.0178\tabularnewline
\hline
2403  & 0.0156  & 0.0105\tabularnewline
\hline
3135  & 0.0070  & 0.0067\tabularnewline
\hline
5607  & 0.0016  & 0.0031\tabularnewline
\hline
\end{tabular}
\hspace{1cm}
\begin{tabular}{|c|c|c|}
\hline
DOF  & $e_{2}$ & $e_{a}$\tabularnewline
\hline
1024  & 0.1082  & 0.0479\tabularnewline
\hline
1639  & 0.0802  & 0.0239\tabularnewline
\hline
2584  & 0.0194  & 0.0114\tabularnewline
\hline
3822  & 0.0061  & 0.0063\tabularnewline
\hline
5660  & 0.0021  & 0.0037\tabularnewline
\hline
\end{tabular}
\par\end{centering}

\caption{Convergence history with $\theta$=0.4. Left: Enrich in $V_{1}$ space only. Right: Enrich in both $V_1$ and $V_2$ spaces.}

\label{table: case1 V1}
\end{table}




\begin{table}[ht]
\begin{centering}
\begin{tabular}{|c|c|c|c|c|}
\hline
DOF  & $e_{2}$ & $e_{a}$ & $e_{2}^{\text{snap}}$ & $e_{a}^{\text{snap}}$\tabularnewline
\hline
1024  & 0.0940  & 0.0469  & 0.0939  & 0.0469\tabularnewline
\hline
1975  & 0.0204  & 0.0121  & 0.0202  & 0.0121\tabularnewline
\hline
2648  & 0.0087  & 0.0077  & 0.0084  & 0.0077\tabularnewline
\hline
3422  & 0.0046  & 0.0056  & 0.0043  & 0.0056\tabularnewline
\hline
6748  & 0.0009  & 0.0022  & 0.0006  & 0.0020\tabularnewline
\hline
\end{tabular}
\hspace{1cm}
\begin{tabular}{|c|c|c|}
\hline
DOF  & $e_{2}$ & $e_{a}$\tabularnewline
\hline
1024  & 0.1082  & 0.0479\tabularnewline
\hline
2048  & 0.0671  & 0.0199\tabularnewline
\hline
3328  & 0.0423  & 0.0150\tabularnewline
\hline
5888  & 0.0161  & 0.0059\tabularnewline
\hline
8448  & 0.0128  & 0.0044\tabularnewline
\hline
\end{tabular}
\par\end{centering}

\caption{Left: Convergence history for oversampling basis with $\theta=0.4$ and enrichment in $V_{1}$ space only.
Right: Convergence history for uniform enrichment in $V_1$ space only.}

\label{table: case1 oversampling}
\end{table}




\begin{figure}[ht]
\begin{centering}
\includegraphics[scale=0.5]{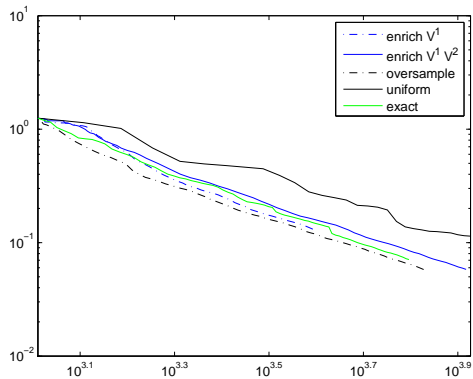}
\par\end{centering}

\caption{A comparison of different ways of enrichment.}

\label{fig: case1 error plot}
\end{figure}




\subsubsection*{Example 2}
In our second example, we will take the source $f$ to be the function shown
in the left plot of Figure \ref{fig:case2 f} and the corresponding fine grid solution shown
in the right plot of Figure \ref{fig:case2 f}. In Table \ref{table: case2 V1}
and Table \ref{table: case2 oversampling}, we present
the convergence history of our algorithm for enriching in $V_{1}$
space only, enriching in both $V_{1}$ and $V_{2}$ spaces and enriching
by oversampling basis.
We see from Table \ref{table: case2 V1} that enrichment in both $V_1$ and $V_2$ spaces
provides much more efficient methods than enrichment in $V_1$ space only.
In particular, for an error level of approximately $1\%$,
we see that enrichment in both $V_1$ and $V_2$ spaces requires $3144$ DOF
while enrichment in $V_1$ space only requires $3483$ DOF.
In Table \ref{table: case2 oversampling}, the convergence behaviour is shown for the oversampling case,
and we see again that a clear convergence is obtained.
For this case, we use $40$ snapshot basis functions per coarse grid block
giving a total DOF of $10240$,
and the corresponding snapshot errors (that is, the difference between the solution obtained by these $10240$ basis functions
and the solution $u_h$)
of $0.0078$ and $0.0093$
in relative $L^2$ norm and relative $a$-norm respectively.
In addition, we observe again that the oversampling basis provides more efficient representation of the solution
than the non-oversampling basis.
To further demonstrate the
efficiency of our algorithm, we compare our results with the uniform
enrichment scheme. The result for using uniform enrichment is shown
in Table \ref{table: case2 oversampling} and we clearly observe that our adaptive method
is more efficient.
Moreover, a comparison of the performance of various strategies is shown in
Figure \ref{fig:case2 error plot},
where the errors against DOF are plotted.
From the figure, we see that our method is much better than uniform enrichment.
Furthermore, enrichment in both $V_1$ and $V_2$ spaces has the best performance,
which suggests that both $V_1$ and $V_2$ spaces are important for more complicated source functions.


\begin{figure}[ht]
\begin{centering}
\includegraphics[scale=0.4]{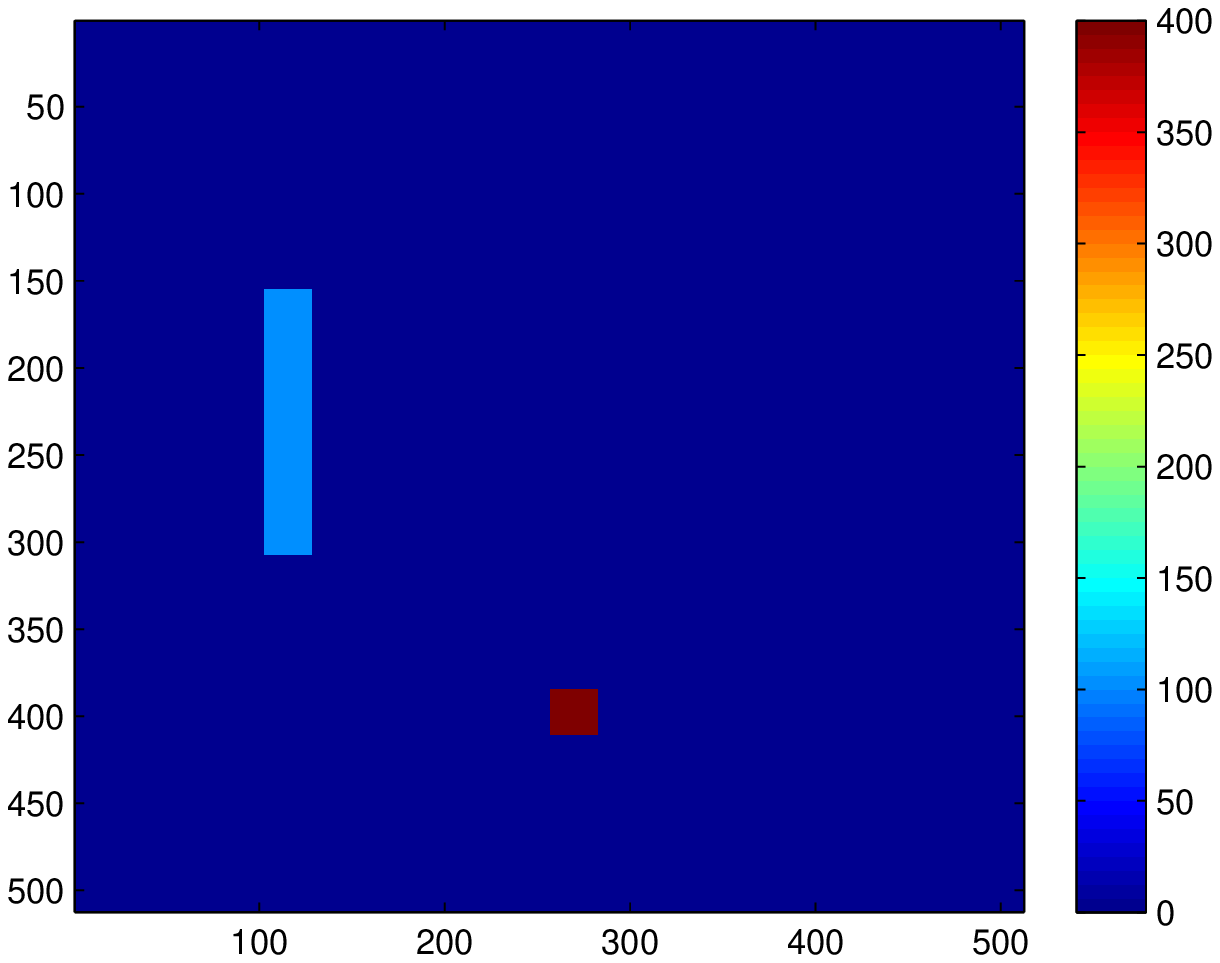}
\includegraphics[scale=0.4]{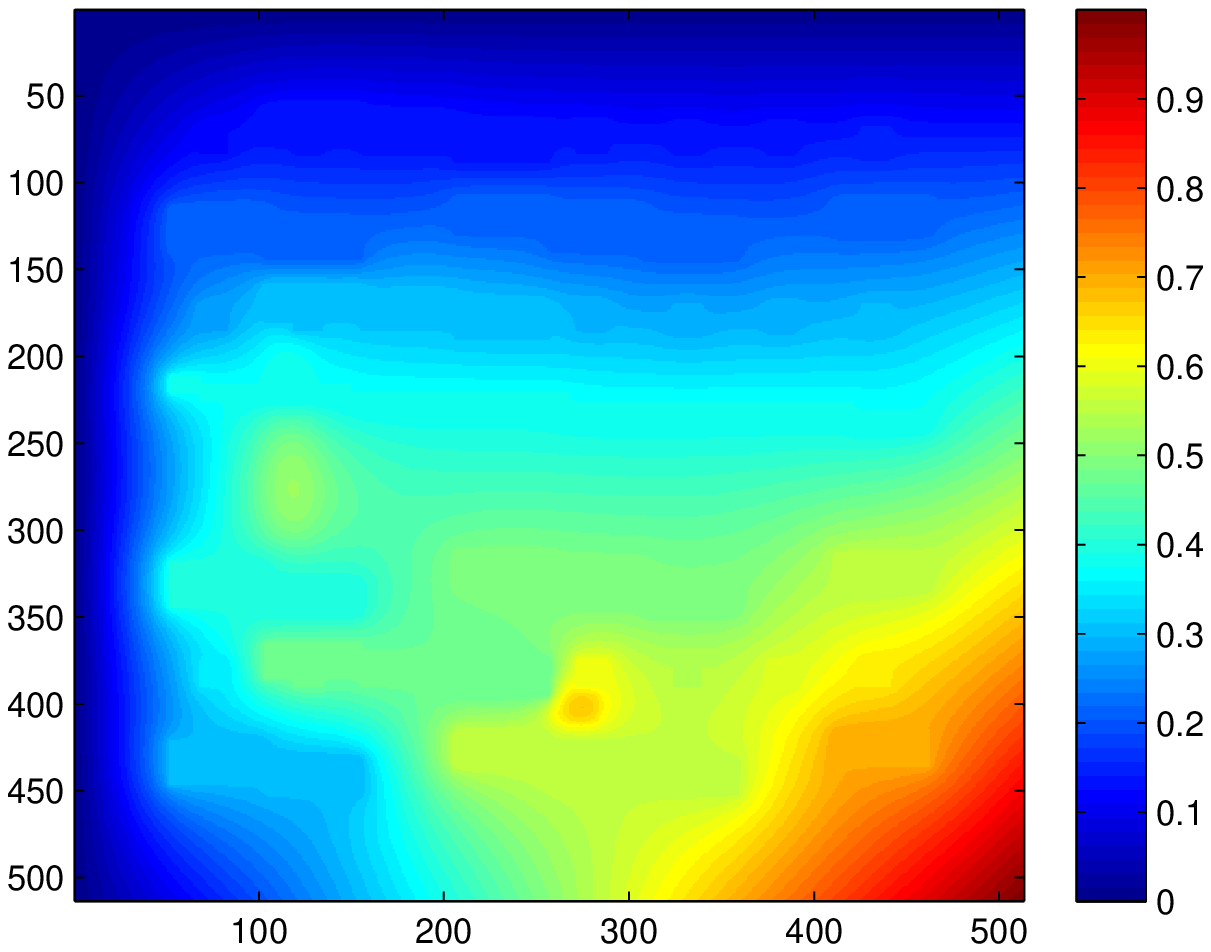}
\par\end{centering}

\caption{Left: The source function $f$ for the second example. Right: The fine grid solution.}

\label{fig:case2 f}
\end{figure}




\begin{table}[ht]
\begin{centering}
\begin{tabular}{|c|c|c|}
\hline
DOF  & $e_{2}$ & $e_{a}$\tabularnewline
\hline
1024  & 0.2052  & 0.0554\tabularnewline
\hline
2028  & 0.0362  & 0.0191\tabularnewline
\hline
2717  & 0.0152  & 0.0140\tabularnewline
\hline
3483  & 0.0111  & 0.0118\tabularnewline
\hline
5116  & 0.0084  & 0.0102\tabularnewline
\hline
\end{tabular}
\hspace{1cm}
\begin{tabular}{|c|c|c|}
\hline
DOF  & $e_{2}$ & $e_{a}$\tabularnewline
\hline
1024  & 0.2052  & 0.0554\tabularnewline
\hline
2023  & 0.0486  & 0.0206\tabularnewline
\hline
3144  & 0.0113  & 0.0105 \tabularnewline
\hline
4456  & 0.0050  & 0.0066\tabularnewline
\hline
7407  & 0.0013  & 0.0034\tabularnewline
\hline
\end{tabular}
\par\end{centering}

\caption{Convergence history with $\theta$=0.4. Left: Enrich in $V_{1}$ space only. Right: Enrich in both $V_1$ and $V_2$ spaces.}

\label{table: case2 V1}
\end{table}




\begin{table}[ht]
\begin{centering}
\begin{tabular}{|c|c|c|c|c|}
\hline
DOF  & $e_{2}$ & $e_{a}$ & $e_{2}^{\text{snap}}$ & $e_{a}^{\text{snap}}$\tabularnewline
\hline
1024  & 0.1882  & 0.0540  & 0.1865  & 0.0532\tabularnewline
\hline
1926  & 0.0296  & 0.0182  & 0.0269  & 0.0156\tabularnewline
\hline
2626  & 0.0137  & 0.0135  & 0.0098  & 0.0098\tabularnewline
\hline
3368  & 0.0105  & 0.0116  & 0.0057  & 0.0070\tabularnewline
\hline
6677  & 0.0080  & 0.0097  & 0.0007  & 0.0025\tabularnewline
\hline
\end{tabular}
\hspace{1cm}
\begin{tabular}{|c|c|c|}
\hline
DOF  & $e_{2}$ & $e_{a}$\tabularnewline
\hline
1024  & 0.2052  & 0.0554\tabularnewline
\hline
2048  & 0.0923  & 0.0282\tabularnewline
\hline
3328  & 0.0659  & 0.0215\tabularnewline
\hline
5888  & 0.0278  & 0.0135\tabularnewline
\hline
8448  & 0.0226  & 0.0121\tabularnewline
\hline
\end{tabular}
\par\end{centering}

\caption{Left: Convergence history for oversampling basis with $\theta=0.4$ and enrichment in $V_{1}$ space only.
Right: Convergence history for uniform enrichment in $V_1$ space only.}

\label{table: case2 oversampling}
\end{table}




\begin{figure}[ht]
\begin{centering}
\includegraphics[scale=0.5]{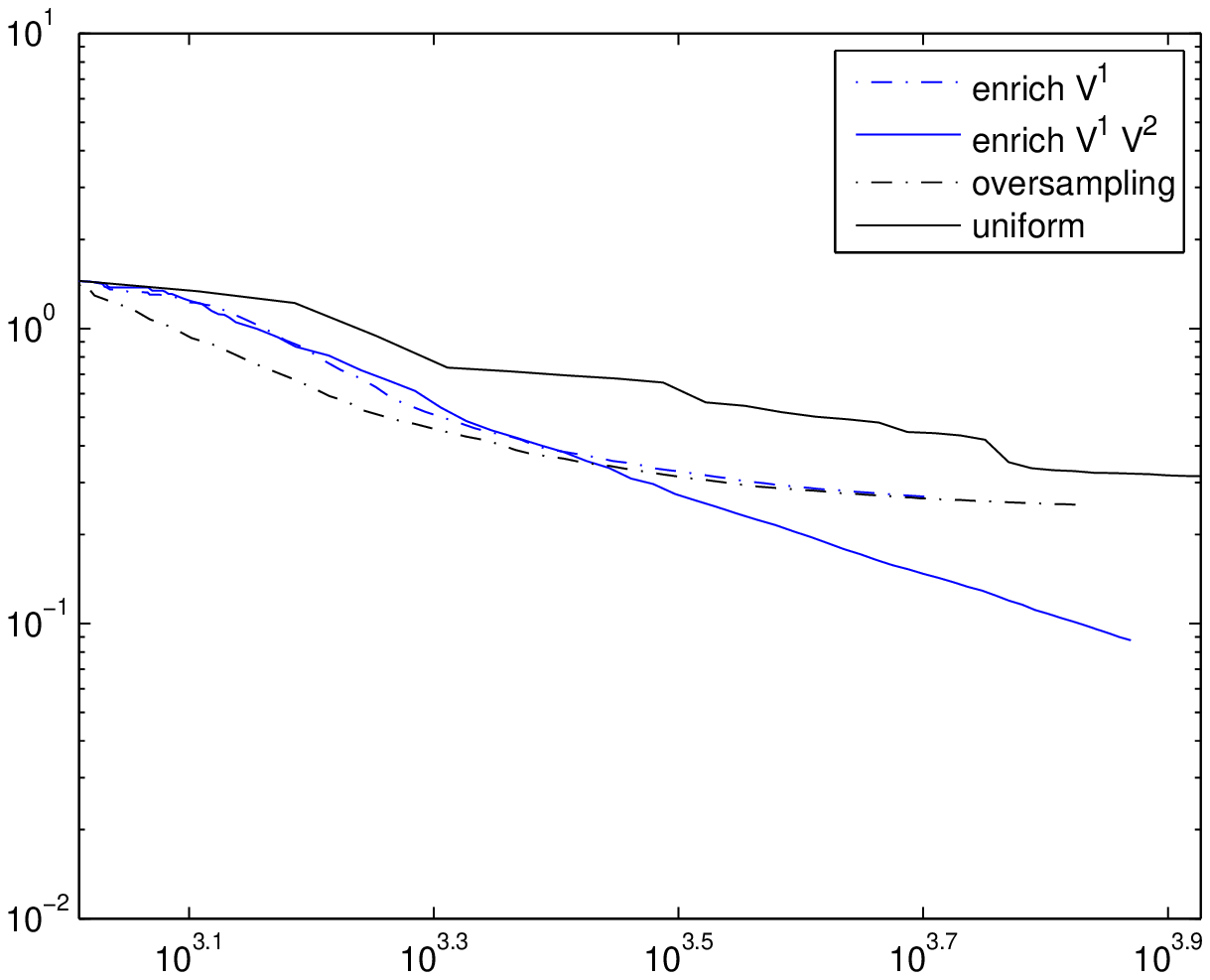}
\par\end{centering}

\caption{A comparison of different ways of enrichment.}

\label{fig:case2 error plot}
\end{figure}






\subsection{Adaptive enrichment algorithm}

\subsubsection{Adaptive enrichment algorithm with basis removal}

In our adaptive enrichment algorithm, we can add basis functions to the offline space
by using the error indicators.
However, the addition of the basis functions must follow the ordering of the eigenfunctions.
There may be cases that some of the intermediate eigenfunctions are not required in the representation of the solution.
Therefore, we propose a numerical strategy to remove basis functions that do not contribute
or contribute less to the representation of the solution.
In the following, we will present this numerical strategy.

\textbf{Adaptive enrichment algorithm with basis removal:} Choose $0<\theta<1$. For
each $m=0,1,\cdots$,
\begin{enumerate}
\item Step 1: Find the solution in the current space. That is, find $u_{H}^{m}\in V^{\text{off}}(m)$
such that
\begin{equation}
a_{\text{DG}}(u_{H}^{m},v)=(f,v)\quad\text{for all}\,\,\, v\in V^{\text{off}}(m).\label{eq:solve1}
\end{equation}

\item Step 2: Compute the local residuals. For each coarse grid block $K_{i}$,
we compute
\[
\eta_{j,i}^{2}=\|R_{j,i}\|^{2}(\lambda_{j,l_{j,i}^m+1}^{{(i)}})^{-1}, \quad j=1,2.
\]
Then we
re-enumerate the $2N$ residuals in the decreasing order, that is, $\eta_{1}^{2}\geq\eta_{2}^{2}\geq\cdots\geq\eta_{2N}^{2}$,
where we adopted single index notations.

\item Step 3: Find the coarse grid blocks where enrichment is needed. We choose
the smallest integer $k$ such that
\begin{equation}
\theta\sum_{J=1}^{2N}\eta_{J}^{2}\leq\sum_{J=1}^{k}\eta_{J}^{2}.\label{eq:criteria1}
\end{equation}

\item Step 4: Enrich the space. For each $J=1,2,\cdots,k$, we add basis
function in $V^{i,\text{off}}_j$ according to the following rule. Let $s$
be the smallest positive integer such that $\lambda_{j,l^m_{j,i}+s+1}^{(i)}$
is large enough (see the proof of Theorem \ref{thm:conv}) compared
with $\lambda_{j,l_{j,i}^m+1}^{(i)}$. Then we include the eigenfunctions
in the construction of the basis functions. The resulting space is
denoted as $\widehat{V}^{\text{off}}(m+1)$.
Note that this is the offline space without basis removal.

\item Step 5: Remove basis. For each coarse grid block $K_i$, we can write the restriction
of the current solution $u_H^m$ on $K_i$ as
\begin{equation*}
 \sum_{l=1}^{l_{1,i}^m} \alpha_{1,l} \phi_l^{(i)} + \sum_{l=1}^{l_{2,i}^m} \alpha_{2,l} \xi_l^{(i)}.
\end{equation*}
Fixed a tolerance $\varepsilon>0$. Then the basis function $\phi_l^{(i)}$ or $\xi_l^{(i)}$ is removed if
\begin{equation*}
\alpha_{1,l}^2 < \varepsilon \Big( \sum_{l=1}^{l_{1,i}^m} \alpha_{1,l}^2 + \sum_{l=1}^{l_{2,i}^m} \alpha_{2,l}^2 \Big)
\quad \text{ or } \quad
\alpha_{2,l}^2 < \varepsilon \Big( \sum_{l=1}^{l_{1,i}^m} \alpha_{1,l}^2 + \sum_{l=1}^{l_{2,i}^m} \alpha_{2,l}^2 \Big)
\end{equation*}
is satisfied. The resulting space is called $V^{\text{off}}(m+1)$.
\end{enumerate}

To test this strategy, we consider our second example with the source function $f$
defined in Figure \ref{fig:case2 f}.
We will consider three choices of $\varepsilon$, with values $10^{-12}$, $10^{-13}$ and $10^{-14}$.
The convergence history of these cases are shown in Table \ref{table: rm case1 tol1e-12}.
We can see that our basis removal strategy gives more efficient representation of the solution.
For example, comparing the errors with DOF of around $2000$
with basis removal (Table \ref{table: rm case1 tol1e-12})
and without basis removal (Table \ref{table: case2 V1}),
we see that the method with basis removal gives a solution with smaller errors
in both $L^2$ norm and $a$-norm.
On the other hand, we see that the choice of $\varepsilon = 10^{-14}$
performs better than $\varepsilon = 10^{-12}$.
In particular, for DOF of around $2200$, the error with $\varepsilon = 10^{-14}$
is around $2\%$ while the error with $\varepsilon = 10^{-12}$
is around $4\%$.
However, one expects that smaller choices of $\varepsilon$
are not as economical as larger choices of $\varepsilon$.




\begin{table}[ht]
\begin{centering}
\begin{tabular}{|c|c|c|}
\hline
DOF & $L_{2}$-error & $a$-error\tabularnewline
\hline
1024 & 0.2052 & 0.0554\tabularnewline
\hline
951 & 0.1824 & 0.0502\tabularnewline
\hline
1074 & 0.1158 & 0.0415\tabularnewline
\hline
1742 & 0.0461 & 0.0174\tabularnewline
\hline
2218 & 0.0404 & 0.0153\tabularnewline
\hline
\end{tabular}
\hspace{1cm}
\begin{tabular}{|c|c|c|}
\hline
DOF & $L_{2}$-error & $a$-error\tabularnewline
\hline
1024 & 0.2052 & 0.0554\tabularnewline
\hline
996 & 0.1767 & 0.0501\tabularnewline
\hline
1107 & 0.1236 & 0.0431\tabularnewline
\hline
2006 & 0.0266 & 0.0154\tabularnewline
\hline
2824 & 0.0192 & 0.0123\tabularnewline
\hline
\end{tabular}
\hspace{1cm}
\begin{tabular}{|c|c|c|}
\hline
DOF & $L_{2}$-error & $a$-error\tabularnewline
\hline
1024 & 0.2052 & 0.0554\tabularnewline
\hline
1048 & 0.1774 & 0.0500\tabularnewline
\hline
1185 & 0.1280 & 0.0434\tabularnewline
\hline
2235 & 0.0223 & 0.0150\tabularnewline
\hline
3275 & 0.0147 & 0.0117\tabularnewline
\hline
\end{tabular}
\par\end{centering}

\caption{Enrichment with $\theta=0.4$ and basis removal as well as  enrichment in $V_{1}$ space only.
Left: $\varepsilon = 10^{-12}$. Middle: $\varepsilon = 10^{-13}$. Right: $\varepsilon = 10^{-14}$}

\label{table: rm case1 tol1e-12}
\end{table}

\subsubsection{Adaptive enrichment using local basis pursuit}

In this section, we discuss an algorithm that follows basis pursuit ideas
\cite{donoho_sparsity_review}
and identify the basis functions which need to be added based on the residual.
The main idea
is to find multiscale basis functions that correlate to the residual the most
and add those basis functions.
More precisely, we identify basis functions that has
the largest correlation
coefficient with respect to the residual and add those
basis functions.
In the following, we will present the details of the numerical algorithm.

\textbf{Adaptive enrichment algorithm using local basis pursuit:} Choose $0<\theta<1$. For
each $m=0,1,\cdots$,
\begin{enumerate}
\item Step 1: Find the solution in the current space. That is, find $u_{H}^{m}\in V^{\text{off}}(m)$
such that
\begin{equation}
a_{\text{DG}}(u_{H}^{m},v)=(f,v)\quad\text{for all}\,\,\, v\in V^{\text{off}}(m).\label{eq:solve2}
\end{equation}

\item Step 2: Compute the local residuals. For each coarse grid block $K_{i}$,
we compute
\[
\zeta_{j,i,l}^{2}=\cfrac{|R_{j,i}(v_l)|^{2}}{\|v_l\|^2_{V_j (K_i)}}, \quad j=1,2, \quad \forall v_l \in V_{j}(K_i).
\]
Then we
re-enumerate these residuals in the decreasing order, that is, $\zeta_{1}^{2}\geq\zeta_{2}^{2}\geq\cdots$,
where we adopted single index notations. Note that
$|R_{j,i}(v_l)|$ is the inner-product that identifies the basis functions
that have the largest correlation to the residual. More precisely,
\[
|R_{j,i}(v_l)|=|\int_{K_i} fv_{l} - a_{DG}(u^m_H,v_{l})|
\]
which is just the local inner-product of the residual vector and basis function $v_l$.
\item Step 3: Find the coarse grid blocks where enrichment is needed. We choose
the smallest integer $k$ such that
\begin{equation}
\eta_{k}\geq \theta \eta_{1}.\label{eq:criteria12}
\end{equation}

\item Step 4: Enrich the space. For each $J=1,2,\cdots,k$, we add the basis function $v_l \in V_j(K_i)$ corresponding to $\zeta _J$. The resulting space is
denoted as $\widehat{V}^{\text{off}}(m+1)$.
Note that this is the offline space without basis removal.

\item Step 5: Remove basis. For each coarse grid block $K_i$, we can write the restriction
of the current solution $u_H^m$ on $K_i$ as
\begin{equation*}
 \sum_{l=1}^{l_{1,i}^m} \alpha_{1,l} \phi_l^{(i)} + \sum_{l=1}^{l_{2,i}^m} \alpha_{2,l} \xi_l^{(i)}.
\end{equation*}
Fixed a tolerance $\varepsilon>0$. Then the basis function $\phi_l^{(i)}$ or $\xi_l^{(i)}$ is removed if
\begin{equation*}
\alpha_{1,l}^2 < \varepsilon \Big( \sum_{l=1}^{l_{1,i}^m} \alpha_{1,l}^2 + \sum_{l=1}^{l_{2,i}^m} \alpha_{2,l}^2 \Big)
\quad \text{ or } \quad
\alpha_{2,l}^2 < \varepsilon \Big( \sum_{l=1}^{l_{1,i}^m} \alpha_{1,l}^2 + \sum_{l=1}^{l_{2,i}^m} \alpha_{2,l}^2 \Big)
\end{equation*}
is satisfied. The resulting space is called $V^{\text{off}}(m+1)$.
\end{enumerate}

To demonstrate the performance of this strategy, we will consider two examples. In the first example, the source function $f$ is defined as in Figure \ref{fig:case2 f} and the rest of the parameters as in the Example 2. In the second example, we will take the solution  (see Figure \ref{fig:sparse sol}) which only contain the component of the 1st, 17th and 30th eigen-basis. The boundary conditions  are as in Example 2 and the source term is calculated based on this
sparse solution.
The convergence history for the first example
is shown in Table \ref{table: BP case1}. Comparing these results to Table
\ref{table: rm case1 tol1e-12}, we can see that the adaptive enrichment provides a better convergence.
The convergence history is substantially improved if we consider
the sparse solution as in our second example. The numerical
results are shown in  Table \ref{table: BP case2}.

\begin{table}[ht]
\begin{centering}
\begin{tabular}{|c|c|c|}
\hline
DOF  & $e_{2}$ & $e_{a}$\tabularnewline
\hline
1024 & 0.2052 & 0.0554\tabularnewline
\hline
1036 & 0.1474 & 0.0405\tabularnewline
\hline
1259 & 0.0585 & 0.0230\tabularnewline
\hline
2096 & 0.0129 & 0.0125\tabularnewline
\hline
2643 & 0.0099 & 0.0111\tabularnewline
\hline
\end{tabular}
\par\end{centering}

\caption{Enrichment using basis pursuit with $\theta=0.8$ and basis removal as well as  enrichment in $V_{1}$ space only. }

\label{table: BP case1}
\end{table}

\begin{figure}[ht]
\begin{centering}
\includegraphics[scale=0.4]{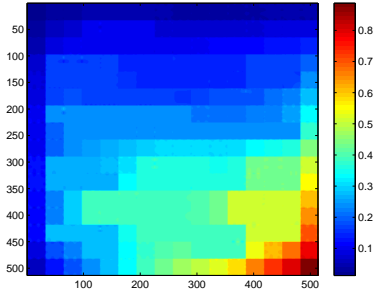}
\par\end{centering}

\caption{Solution with sparse coefficient}

\label{fig:sparse sol}
\end{figure}

\begin{table}[ht]
\begin{centering}
\begin{tabular}{|c|c|c|}
\hline
DOF  & $e_{2}$ & $e_{a}$\tabularnewline
\hline
1024 & 0.0150 & 0.0424\tabularnewline
\hline
941 & 0.0069 & 0.0286\tabularnewline
\hline
934 & 0.0032 & 0.0135\tabularnewline
\hline
688 & 0.0001 & 0.0010\tabularnewline
\hline
744 & 1.20e-06 & 2.13e-05\tabularnewline
\hline
\end{tabular} %
\begin{tabular}{|c|c|c|}
\hline
DOF  & $e_{2}$ & $e_{a}$\tabularnewline
\hline
1024 & 0.0150 & 0.0424\tabularnewline
\hline
997 & 0.0150 & 0.0424\tabularnewline
\hline
1327 & 0.0108 & 0.0412\tabularnewline
\hline
2447 & 0.0023 & 0.0154\tabularnewline
\hline
889 & 0.0003 & 0.0022\tabularnewline
\hline
\end{tabular}
\par\end{centering}

\caption{Enrichment with $\theta$=0.8 . Left: basis pursuit. Right: standard
enrichment}

\label{table: BP case2}
\end{table}

\section{Convergence analysis}

\label{sec:analysis}

In this section, we will provide the proofs for the a-posteriori error
estimates (Theorem \ref{thm:post}) and the convergence of the adaptive enrichment
algorithm (Theorem \ref{thm:conv}). 

For each coarse grid block $K_i$,
$i=1,2,\cdots,N$, we define two projection operators $P_{j,i}:V^{i,\text{snap}}_j \rightarrow V^{i,\text{off}}_j$, $j=1,2$,
from the local snapshot spaces to the corresponding local offline spaces
by
\begin{align*}
\int_{\partial K_{i}}\widetilde{\kappa}P_{1,i}(v)w & =\int_{\partial K_{i}}\widetilde{\kappa}vw\quad\quad \forall w\in V^{i,\text{off}}_1, \\
\int_{K_{i}}\kappa P_{2,i}(v)w & =\int_{K_{i}}\kappa vw\quad\quad\quad \forall w\in V^{i,\text{off}}_2.
\end{align*}
 For any $v\in V^{i,\text{snap}}_1$, we can write
 $v=\sum_{l=1}^{M_{i}}c_{1,l}\phi_{l}^{(i)}$.
 By orthogonality of eigenfunctions, we have $P_{1,i}(v)=\sum_{l=1}^{l_{1,i}}c_{1,l}\phi_{l}^{(i)}$.
Therefore, by the equivalence of $\|\cdot\|_a$ and $\|\cdot \|_{\text{DG}}$, we have
\begin{equation*}
\|P_{1,i}(v)\|_{a}^{2}  \leq a_1\left(\int_{K_{i}}\kappa|\nabla P_{1,i}(v)|^{2}+\cfrac{\gamma}{h}\int_{\partial K_{i}}\widetilde{\kappa}P_{1,i}(v)^{2}\right).
\end{equation*}
By the spectral problem (\ref{eq:spec-dg}) and the fact that the eigenvalues are ordered increasingly, we have
\begin{align*}
 \|P_{1,i}(v)\|_{a}^{2}& \leq a_1\left(\sum_{l=1}^{l_{1,i}}\cfrac{\lambda_{{1,l}}^{{(i)}}}{H}c_{1,l}^{2}+\cfrac{\gamma}{h}\int_{\partial K_{i}}\widetilde{\kappa}P_{1,i}(v)^{2}\right)\\
 & \leq a_1\left(\cfrac{\lambda_{{1,l_{1,i}}}^{{(i)}}}{H}+\cfrac{\gamma}{h}\right)\sum_{l=1}^{l_{1,i}}c_{1,l}^{2}=a_1\left(\lambda_{{1,l_{1,i}}}^{{(i)}}+\cfrac{\gamma H}{h}\right)\|v\|^2_{V_{1}(K_{i})}.
\end{align*}
Similarly, for $v=\sum_{l \geq 1}c_{2,l}\xi_{l}^{(i)}$, we have $P_{2,i}(v)=\sum_{l=1}^{l_{2,i}}c_{2,l}\xi_{l}^{(i)}$.
Therefore, by the equivalence of $\|\cdot\|_a$ and $\|\cdot \|_{\text{DG}}$, we have
\begin{equation*}
\|P_{2,i}(v)\|_{a}^{2}  \leq a_1\left(\int_{K_{i}}\kappa|\nabla P_{2,i}(v)|^{2}\right).
\end{equation*}
By the spectral problem (\ref{eq:spec-dg-1}) and the fact that the eigenvalues are ordered increasingly, we have
\begin{equation*}
\|P_{2,i}(v)\|_{a}^{2}  \leq a_1\left(\sum_{l=1}^{l_{2,i}}\cfrac{\lambda_{{2,l}}^{{(i)}}}{H^{2}}c_{2,l}^{2}\right)
  \leq a_1\left(\cfrac{\lambda_{{2,l_{2,i}}}^{{(i)}}}{H^{2}}\right)\sum_{l=1}^{l_{2,i}}c_{2,l}^{2}= a_1 \lambda_{{2,l_{2,i}}}^{{(i)}} \|v\|^2_{V_{2}(K_{i})}.
\end{equation*}
Thus the projections $P_{j,i}$ satisfy the following stability bound
\begin{equation}
||P_{j,i}(v)||_{a}\leq a_1^{\frac{1}{2}}\left(\lambda_{{j,l_{j,i}}}^{{(i)}}+\cfrac{\gamma H}{h}\right)^{\frac{1}{2}}\|v\|_{V_{j}(K_{i})}, \quad j=1,2, \quad i=1,2,\cdots, N.
\label{eq:eigenstab}
\end{equation}

Next, we will establish some approximation properties for the projection operators $P_{j,i}$.
Indeed, by the definitions of the operators $P_{j,i}$, for any $v\in V^{i,\text{snap}}_j$,
\begin{equation*}
\|v-P_{j,i}(v)\|_{V_{j}(K_{i})}^{2}  =H^{-j}\sum_{l\geq l_{j,i}+1}c_{j,l}^{2}
  \leq(\lambda_{l_{j,i}+1}^{{(i)}})^{-1}\sum_{l \geq l_{j,i}+1}\cfrac{\lambda_{j,l}^{{(i)}}}{H^{j}}c_{j,l}^{2}
  =(\lambda_{{j,l_{j,i}+1}}^{{(i)}})^{-1} \int_{K_{i}} \kappa |\nabla v|^{2},
\end{equation*}
and therefore the following convergence result holds
\begin{align}
\|v-P_{j,i}(v)\|_{V_{j}(K_{i})} & \leq \Big(\lambda_{{j,l_{j,i}+1}}^{{(i)}}\Big)^{-\frac{1}{2}} \Big(\int_{K_{i}} \kappa |\nabla v|^{2}\Big)^{\frac{1}{2}}.
\label{eq:eigenbound}
\end{align}
For the analysis presented below, we
define the projection $\Pi:V^{\text{snap}}\rightarrow V^{\text{off}}$ by
$\Pi v=\sum_{i=1}^{N}\sum_{j=1}^{2}P_{j,i}(v)$.

\subsection{Proof of Theorem \ref{thm:post}}

Let $v\in V_{\text{DG}}^h$ be an arbitrary function in the space $V_{\text{DG}}^h$. Using \eqref{eq:fine},
we have
\[
a_{\text{DG}}(u_h-u_{H},v)=a_{\text{DG}}(u_h,v)-a_{\text{DG}}(u_{H},v)=(f,v)-a_{\text{DG}}(u_{H},v).
\]
Therefore,
\[
a_{\text{DG}}(u_h-u_{H},v)=(f,v)-a_{\text{DG}}(u_{H},v)=(f,v-\Pi v)+(f,\Pi v)-a_{\text{DG}}(u_{H},\Pi v)-a_{\text{DG}}(u_{H},v-\Pi v).
\]
Thus, using \eqref{eq:coarse}, we have
\begin{equation}
a_{\text{DG}}(u_h-u_{H},v)=(f,v-\Pi v)-a_{\text{DG}}(u_{H},v-\Pi v).\label{eq:err1}
\end{equation}
Since the space $V_{\text{DG}}^h$ is the same as $V^{\text{snap}}$, we can write
$v = \sum_{i=1}^N \sum_{j=1}^2 v_j^{(i)}$ with $v_j^{(i)} \in V^{i,\text{snap}}_j$.
Hence, (\ref{eq:err1}) becomes
\begin{equation}
a_{\text{DG}}(u_h-u_{H},v)=\sum_{i=1}^{N}\sum_{j=1}^{2}\Big(\int_{K_{i}}f(v^{(i)}_{j}-P_{j,i}v^{(i)}_{j})-a_{\text{DG}}(u_{H},v^{(i)}_{j}-P_{j,i}v^{(i)}_{j})\Big).\label{eq:err2}
\end{equation}
We remark that, in the computation of the term $a_{\text{DG}}(u_{H},v^{(i)}_{j}-P_{j,i}v^{(i)}_{j})$ in
(\ref{eq:err2}), we assume that the second argument is zero outside the coarse grid block $K_i$.

Using the definition of $R_{j,i}$, we see that \eqref{eq:err2} can
be written as
\[
a_{\text{DG}}(u_h-u_{H},v)=\sum_{i=1}^{N}\sum_{j=1}^{2}R_{j,i}(v^{(i)}_{j}-P_{j,i}v^{(i)}_{j}).
\]
Thus, we have
\[
a_{\text{DG}}(u_h-u_{H},v)\leq\sum_{i=1}^{N}\sum_{j=1}^{2}\|R_{j,i}\| \|(v^{(i)}_{j}-P_{j,i}v^{(i)}_j)\|_{V_{j}(K_{i})}.
\]
Using \eqref{eq:eigenbound},
\[
a_{\text{DG}}(u_h-u_{H},v)\leq\sum_{i=1}^{N}\sum_{j=1}^{2}\|R_{j,i}\| \Big(\lambda_{j,l_{j,i}+1}^{{(i)}}\Big)^{-\frac{1}{2}} \Big(\int_{K_{i}} \kappa |\nabla v^{(i)}_{j}|^2\Big)^{\frac{1}{2} }.
\]
The inequality \eqref{eq:res2} is then followed by taking $v=u_h-u_{H}$
and $\sum_{i=1}^{N}\sum_{j=1}^{2}\int_{K_{i}} \kappa |\nabla v^{(i)}_{j}|^2\leq \| v\|_{\text{DG}}^{2} \leq a_0\|v\|_{a}^{2}$.

\subsection{An auxiliary lemma}

In this section, we will derive an auxiliary lemma which will be used
for the proof of the convergence of the adaptive enrichment algorithm
stated in Theorem \ref{thm:conv}. We use the notation $P_{j,i}^{m}$
to denote the projection operator $P_{j,i}$ at the enrichment level
$m$.


In Theorem \ref{thm:post}, we see that $\|R_{j,i}\|$ gives an upper
bound of the energy error $\|u_h-u_{H}\|_{a}$. We will first
show that, $\|R_{j,i}\|$ is also a lower bound up to a correction
term (see Lemma \ref{lem:recurR}).
To state this precisely, we define
\begin{equation}
S_{j,i}^{m}=(\lambda_{j,l_{j,i}^m+1}^{{(i)}})^{-\frac{1}{2}}  \sup_{v\in V^{i,\text{snap}}_j}\frac{|R_{j,i}(v-P_{j,i}^{m}(v))|}{\|v\|_{V_{j}(K_{i})}}. \label{eq:defSR}
\end{equation}
Notice that
the residual $R_{j,i}$ is computed using the solution $u_{H}^{m}$
obtained at enrichment level $m$. We omit the index $m$ in $R_{j,i}$
to simplify notations. Next, we will obtain
\begin{equation}
(S_{j,i}^{m})^{2} =
\|R_{j,i}\|^{2}(\lambda_{j,l_{j,i}^m+1}^{{(i)}})^{-1}. \label{eq:Rbound}
\end{equation}
Indeed, by the fact that $P_{j,i}^m(v) \in V^{i,\text{off}}_j$,
\begin{equation*}
R_{j,i}(P_{j,i}^{m}(v))  =\int_{K_{i}}f P_{j,i}^{m}(v)-a_{\text{DG}}(u_{H}^{m},P_{j,i}^{m}(v)) = 0.
\end{equation*}
Thus,
\begin{equation*}
S_{j,i}^{m}=(\lambda_{j,l_{j,i}^m+1}^{{(i)}})^{-\frac{1}{2}}  \sup_{v\in V^{i,\text{snap}}_j}\frac{|R_{j,i}(v-P_{j,i}^{m}(v))|}{\|v\|_{V_{j}(K_{i})}}
=(\lambda_{j,l_{j,i}^m+1}^{{(i)}})^{-\frac{1}{2}}  \sup_{v\in V^{i,\text{snap}}_j}\frac{|R_{j,i}(v)|}{\|v\|_{V_{j}(K_{i})}}
=(\lambda_{j,l_{j,i}^m+1}^{{(i)}})^{-\frac{1}{2}}  \| R_{j,i} \|.
\end{equation*}
This implies (\ref{eq:Rbound}).


To prove Theorem \ref{thm:conv}, we will need the following recursive
properties for $S_{j,i}^{m}$ (see Lemma \ref{lem:recurR}).
Notice that, the notation $\|u\|_{a,K_i}$ is defined as
\[
\|u\|_{a,K_i}^2=
a_{\text{DG}}({u},{u})=\int_{K_i} \kappa |\nabla u|^2
-\sum_{E\in\partial K_i}2\int_{E}\average{{\kappa}\nabla{u}\cdot{n}_{E}}\jump{u}+\sum_{E\in\partial K_i}\frac{\gamma}{h}\int_{E}\overline{\kappa}\jump{u}^2.
\]

\begin{lemma} \label{lem:recurR} For any $\alpha>0$, we have
\begin{equation}
(S_{j,i}^{m+1})^{2}\leq(1+\alpha)\frac{\lambda_{j,l_{j,i}^m+1}^{{(i)}}}{\lambda_{j,l_{j,i}^{m+1}+1}^{{(i)}}}(S_{j,i}^{m})^{2}+(1+\alpha^{-1}) a_1D\|u_{H}^{m+1}-u_{H}^{m}\|_{a,K_{i}}^{2}\label{eq:Sbound}
\end{equation}
where the enrichment level dependent constant $D$ is defined by
by
\[
D=\left(\cfrac{\Lambda_{j,i}}{\lambda_{j,l_{j,i}^{m+1}+1}^{{(i)}}}+\cfrac{\gamma H}{h\lambda_{j,l_{j,i}^{m+1}+1}^{{(i)}}}\right)
\]
with $\Lambda_{j,i} = \max_{l} \lambda_{j,l}^{(i)}$.

\end{lemma}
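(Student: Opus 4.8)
The plan is to prove the recursive bound \eqref{eq:Sbound} by comparing the supremum defining $S_{j,i}^{m+1}$ with the one defining $S_{j,i}^{m}$, using the fact that the only change between level $m$ and $m+1$ on the block $K_i$ is (a) the offline space $V^{i,\text{off}}_j$ grows, hence $P_{j,i}^{m+1}$ differs from $P_{j,i}^{m}$, and (b) the computed solution changes from $u_H^m$ to $u_H^{m+1}$, hence the residual functional changes. The key algebraic identity is that for any $v \in V^{i,\text{snap}}_j$,
\[
R_{j,i}^{m+1}(v) = R_{j,i}^{m}(v) - a_{\text{DG}}(u_H^{m+1}-u_H^{m}, v),
\]
which follows directly from the definitions of $R_{j,i}$ and the linearity of $a_{\text{DG}}$ in its first argument. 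Combining this with the triangle inequality in the relevant norms and Young's inequality (with parameter $\alpha>0$) will split $(S_{j,i}^{m+1})^2$ into a term controlled by $(S_{j,i}^{m})^2$ and a term controlled by $\|u_H^{m+1}-u_H^{m}\|_{a,K_i}^2$.

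\textbf{First I would} rewrite $S_{j,i}^{m+1}$ using \eqref{eq:Rbound}, i.e.\ $S_{j,i}^{m+1} = (\lambda_{j,l_{j,i}^{m+1}+1}^{(i)})^{-1/2}\,\|R_{j,i}^{m+1}\|$, so the projection operator disappears and I only need to bound the dual norm $\|R_{j,i}^{m+1}\|$ over $V^{i,\text{snap}}_j$. Applying the identity above,
\[
|R_{j,i}^{m+1}(v)| \le |R_{j,i}^{m}(v)| + |a_{\text{DG}}(u_H^{m+1}-u_H^{m}, v)|.
\]
The first term divided by $\|v\|_{V_j(K_i)}$ is at most $\|R_{j,i}^{m}\| = (\lambda_{j,l_{j,i}^{m}+1}^{(i)})^{1/2} S_{j,i}^{m}$. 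For the second term, since $v \in V^{i,\text{snap}}_j$ is supported on $K_i$ and $u_H^{m+1}-u_H^{m}$ restricted to $K_i$ lies in $V^{i,\text{snap}}_j$ as well (or can be projected there), I would use the Cauchy--Schwarz-type bound $|a_{\text{DG}}(u_H^{m+1}-u_H^{m}, v)| \le a_1^{1/2}\|u_H^{m+1}-u_H^{m}\|_{a,K_i}\,\|v\|_{a,K_i}$ together with the stability estimate \eqref{eq:eigenstab} (more precisely its analogue bounding $\|v\|_{a,K_i}^2 \le a_1(\Lambda_{j,i}/H^j + \gamma/h)\,\|v\|_{V_j(K_i)}^2 = a_1(\Lambda_{j,i} + \gamma H/h)\,\|v\|_{V_j(K_i)}^2$ when $j=1$, with the analogous statement for $j=2$ where the $\gamma H/h$ term is absent). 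Dividing through by $\|v\|_{V_j(K_i)}$, taking the supremum, then squaring and applying Young's inequality $(a+b)^2 \le (1+\alpha)a^2 + (1+\alpha^{-1})b^2$, and finally multiplying by $(\lambda_{j,l_{j,i}^{m+1}+1}^{(i)})^{-1}$ yields exactly \eqref{eq:Sbound} with the stated constant $D$, the factor $\lambda_{j,l_{j,i}^{m}+1}^{(i)}/\lambda_{j,l_{j,i}^{m+1}+1}^{(i)}$ appearing naturally from the mismatch of normalizing eigenvalues.

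\textbf{The main obstacle} I anticipate is handling the term $|a_{\text{DG}}(u_H^{m+1}-u_H^{m}, v)|$ cleanly, because $a_{\text{DG}}$ is a global bilinear form involving jump and flux contributions on the coarse edges $\partial K_i$, whereas $v$ is local to $K_i$. One must be careful that only the contributions associated with $K_i$ and its boundary edges survive when $v$ is supported on $K_i$, so that the bound is genuinely in terms of the localized norm $\|\cdot\|_{a,K_i}$ (as defined just before the lemma, including the boundary jump and flux terms on $\partial K_i$) rather than the global DG-norm. A secondary technical point is the case distinction $j=1$ versus $j=2$: for $j=2$ the snapshot functions vanish on $\partial K_i$, so the jump and flux terms on $\partial K_i$ drop out and the constant $D$ simplifies; the statement as written covers both by the convention $\lambda$-indexing, so I would verify that the $j=2$ argument indeed gives a bound no worse than the stated $D$. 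Once the localization is justified, the remaining steps are routine applications of Cauchy--Schwarz, \eqref{eq:eigenstab}, and Young's inequality.
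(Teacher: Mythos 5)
Your proposal is correct and follows essentially the same route as the paper: the residual identity $R_{j,i}^{m+1}(v)=R_{j,i}^{m}(v)-a_{\text{DG}}(u_H^{m+1}-u_H^m,v)$, localization of the second term to $K_i$, the eigenfunction-expansion stability bound $\|v\|_{a,K_i}^2\le a_1(\Lambda_{j,i}+\gamma H/h)\|v\|_{V_j(K_i)}^2$, and Young's inequality. The only cosmetic differences are that the paper first inserts $P_{j,i}^m(v)$ via Galerkin orthogonality before applying the stability bound (which does not change the final constant), and that your extra $a_1^{1/2}$ in the local Cauchy--Schwarz is unnecessary (the localized form is positive semi-definite, so Cauchy--Schwarz holds with constant one in $\|\cdot\|_{a,K_i}$) and would otherwise inflate the constant from $a_1D$ to $a_1^2D$.
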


\begin{proof} By direct calculations, we have
\begin{equation}
\begin{split} & \:\int_{K_{i}}f(v-P_{j,i}^{m+1}(v))-a_{\text{DG}}(u_{H}^{m+1},v-P_{j,i}^{m+1}(v))\\
= & \:\int_{K_{i}}fv-a_{\text{DG}}(u_{H}^{m+1},v)\\
= & \:\int_{K_{i}}fv-a_{\text{DG}}(u_{H}^{m},v)+a_{\text{DG}}(u_{H}^{m}-u_{H}^{m+1},v)\\
= & \:\int_{K_{i}}f(v-P_{j,i}^{m}(v))-a_{\text{DG}}(u_{H}^{m},v-P_{j,i}^{m}(v))+a_{\text{DG}}(u_{H}^{m}-u_{H}^{m+1},v).
\end{split}
\label{err}
\end{equation}
By definition of $S_{j,i}^{m}$, we have
\begin{equation}
S_{j,i}^{m}=(\lambda_{j,l_{j,i}^m+1}^{{(i)}})^{-\frac{1}{2}}\sup_{v\in V^{i,\text{snap}}_j}\frac{|\int_{K_{i}}f(v-P_{j,i}^{m}(v))-a_{\text{DG}}(u_{H}^{m},v-P_{j,i}^{m}(v))|}{\|v\|_{V_{j}(K_{i})}}.
\end{equation}
Multiplying \eqref{err} by $(\lambda_{j,l_{j,i}^m+1}^{{(i)}})^{-\frac{1}{2}} \|v\|_{V_{j}(K_{i})}^{-1}$
and taking supremum with respect to $v$, we have
\begin{equation}
S_{j,i}^{m+1}\leq \Big( \frac{\lambda_{j,l_{j,i}^m+1}^{{(i)}}}{\lambda_{j,l_{j,i}^{m+1}+1}^{{(i)}}} \Big)^{\frac{1}{2}}S_{j,i}^{m}+I, \label{err1}
\end{equation}
where
\[
I= (\lambda_{j,l_{j,i}^m+1}^{{(i)}})^{-\frac{1}{2}} \sup_{v\in V^{i,\text{snap}}_j} \frac{|a_{\text{DG}}(u_{H}^{m}-u_{H}^{m+1},v)|}{\|v\|_{V_{j}(K_{i})}}.
\]

To estimate $I$, we note that
\begin{equation*}
a_{\text{DG}}(u_{H}^{m},P_{j,i}^{m}(v))=\int_{K_{i}}f P_{j,i}^{m}(v)=a_{\text{DG}}(u_{H}^{m+1},P_{j,i}^{m}(v)).
\end{equation*}
Therefore, we have
\begin{equation*}
a_{\text{DG}}(u_H^m-u_H^{m+1},v) = a_{\text{DG}}(u_H^m-u_H^{m+1},v - P_{j,i}^m(v))
\leq \| u_H^m - u_H^{m+1} \|_{a,K_i} \| v - P_{j,i}^m(v) \|_{a,K_i},
\end{equation*}
where we remark that $v$ has value zero outside $K_i$.
By the stability bound (\ref{eq:eigenstab}),
\begin{equation*}
\| v-P_{j,i}^m(v)  \|_{a}\leq a_1^{\frac{1}{2}}\left( \Lambda_{j,i}+\cfrac{\gamma H}{h}\right)^{\frac{1}{2}} \|v-P_{j,i}^m(v)\|_{V_{j}(K_{i})}
\leq  a_1^{\frac{1}{2}}\left( \Lambda_{j,i}+\cfrac{\gamma H}{h}\right)^{\frac{1}{2}} \|v\|_{V_{j}(K_{i})}.
\end{equation*}
Thus we have
\[
I\leq a_1^{\frac{1}{2}} (\lambda_{j,l_{j,i}^m+1}^{{(i)}})^{-\frac{1}{2}} \left( \Lambda_{j,i}+\cfrac{\gamma H}{h}\right)^{\frac{1}{2}}\|u_{H}^{m+1}-u_{H}^{m}\|_{a,K_{i}}.
\]
Using \eqref{err1}, we get
\[
S_{j,i}^{m+1}\leq\Big(\frac{\lambda_{j,l_{j,i}^m+1}^{{(i)}}}{\lambda_{j,l_{j,i}^{m+1}+1}^{{(i)}}}\Big)^{\frac{1}{2}}S_{j,i}^{m}
+a_1^{\frac{1}{2}}\left(\cfrac{\Lambda_{j,i}}{\lambda_{j,l_{j,i}^{m+1}+1}^{{(i)}}}+\cfrac{\gamma H}{h\lambda_{j,l_{j,i}^{m+1}+1}^{{(i)}}}\right)^{\frac{1}{2}}\|u_{\text{ms}}^{m+1}-u_{\text{ms}}^{m}\|_{a,K_{i}}.
\]
Hence, \eqref{eq:Sbound} is proved. \end{proof}

\subsection{Proof of Theorem \ref{thm:conv}}

In this section, we prove the convergence of the adaptive enrichment
algorithm. First of all, we recall that
\begin{equation}
\eta_{j,i}^{2}=\|R_{j,i}\|^{2}(\lambda_{j,l_{j,i}^m+1}^{{(i)}})^{-1} = (S^m_{j,i})^2.
\label{eq:etaS}
\end{equation}
We will use the single index notation $\eta_J$ and $S^m_J$
for $\eta_{j,i}$ and $S^m_{j,i}$ respectively.

Let $0<\theta<1$. We choose an index set $I$ so that
\begin{equation}
\theta^{2}\sum_{J=1}^{2N}\eta_{J}^{2}\leq\sum_{J\in I}\eta_{J}^{2}.\label{eq:indicator}
\end{equation}
We will then add basis function from $V^{i,\text{snap}}_j$ with
$J\in I$. Then, using Theorem \ref{thm:post} and \eqref{eq:indicator},
we have
\[
\theta^{2}\|u_h-u_{H}^{m}\|_{a}^{2}\leq\theta^{2}C_{\text{err}}\sum_{J=1}^{2N}\eta_{J}^{2}\leq C_{\text{err}}\sum_{J\in I}\eta_{J}^{2}.
\]
By (\ref{eq:etaS}), we also have
\begin{equation}
\|u_h-u_{H}^{m}\|_{a}^{2}\leq \cfrac{C_{\text{err}}}{\theta^{2}}\sum_{J\in I}(S_{J}^{m})^{2}. \label{eq:conv2}
\end{equation}

On the other hand,
\[
\sum_{J=1}^{N}(S_{J}^{m+1})^{2}=\sum_{J\in I}(S_{J}^{m+1})^{2}+\sum_{J\notin I}(S_{J}^{m+1})^{2}.
\]
By lemma \ref{lem:recurR}, we have
\begin{eqnarray*}
\sum_{J=1}^{N}(S_{J}^{m+1})^{2} & \leq & \sum_{J\in I}\Big((1+\alpha)\frac{\lambda_{j,l_{j,i}^m+1}^{{(i)}}}{\lambda_{j,l_{j,i}^{m+1}+1}^{{(i)}}}(S_{J}^{m})^{2}+(1+\alpha^{-1})a_1 D\|u_{H}^{m+1}-u_{H}^{m}\|_{a,K_{i}}^{2}\Big)\\
 &  & +\sum_{J\notin I}\left((1+\alpha)(S_{J}^{m})^{2}+(1+\alpha^{-1})a_1 D\|u_{H}^{m+1}-u_{H}^{m}\|_{a,K_{i}}^{2}\right).
\end{eqnarray*}
We assume the enrichment is obtained so that
\[
\delta :=\max_{J\in I} \frac{\lambda_{j,l_{j,i}^m+1}^{{(i)}}}{\lambda_{j,l_{j,i}^{m+1}+1}^{{(i)}}} \leq \delta_0 <1,
\]
where $\delta_0$ is independent of $m$.
We then have
\[
\sum_{J=1}^{2N}(S_{J}^{m+1})^{2}\leq(1+\alpha)\sum_{J=1}^{2N}(S_{J}^{m})^{2}-(1+\alpha)(1-\delta_0)\sum_{J\in I}(S_{J}^{m})^{2}+\delta L\|u_{H}^{m+1}-u_{H}^{m}\|_{a}^{2},
\]
where
\begin{equation}
L_{m+1}=N_E(1+\alpha^{-1}) a_1 \Big( \max_{1\leq i\leq N} \max_{1\leq j\leq 2} D \Big), \label{eq:assumeL2}
\end{equation}
where $N_E$ is the maximum number of edges of coarse grid blocks,
and we also emphasise that $L_{m+1}$ depends on $m$.
By (\ref{eq:indicator}),
\[
\sum_{J=1}^{2N}(S_{J}^{m+1})^{2}\leq(1+\alpha)\sum_{J=1}^{2N}(S_{J}^{m})^{2}-(1+\alpha)(1-\delta_0)\theta^{2}\sum_{J=1}^{2N} (S_{J}^{m})^{2}+\delta L_{m+1}\|u_{H}^{m+1}-u_{H}^{m}\|_{a}^{2}.
\]
Let $\rho=(1+\alpha)(1-(1-\delta_0)\theta^{2})$. We choose $\alpha>0$
small enough so that $0<\rho<1$. The above is then written as
\begin{equation}
\sum_{J=1}^{2N}(S_{J}^{m+1})^{2}\leq\rho\sum_{J=1}^{2N}(S_{J}^{m})^{2}+\delta L_{m+1} \|u_{H}^{m+1}-u_{H}^{m}\|_{a}^{2}.
\end{equation}

Note that, by Galerkin orthogonality, we have
\[
\|u_{H}^{m+1}-u_{H}^{m}\|_{a}^{2}=\|u_h-u_{H}^{m}\|_{a}^{2}-\|u_h-u_{H}^{m+1}\|_{a}^{2}.
\]
So, we have
\begin{equation}
\sum_{J=1}^{2N}(S_{J}^{m+1})^{2}\leq\rho\sum_{J=1}^{2N}(S_{J}^{m})^{2}+\delta L_{m+1} (\|u_h-u_{H}^{m}\|_{a}^{2}-\|u_h-u_{H}^{m+1}\|_{a}^{2})
\end{equation}
which implies
\begin{equation}
\|u_h-u_{H}^{m+1}\|_{a}^{2} +
\cfrac{1}{\delta L_{m+1}}\sum_{J=1}^{2N}(S_{J}^{m+1})^{2}\leq\|u_h-u_{H}^{m}\|_{a}^{2}+\frac{\rho}{\delta L_{m+1}}\sum_{J=1}^{2N}(S_{J}^{m})^{2}.\label{eq:conv1}
\end{equation}

Finally, using \eqref{eq:conv2},
\begin{equation*}
 \|u_h-u_{H}^{m+1}\|_{a}^{2}+\frac{1}{\delta L_{m+1}}\sum_{J=1}^{2N}(S_{J}^{m})^{2}\leq(1-\beta)\|u_h-u_{H}^{m}\|_{a}^{2}+(\frac{\beta C_{err}}{\theta^2}+\frac{\rho}{\delta L_{m+1}})\sum_{J=1}^{2N}(S_{J}^{m})^{2}.
\end{equation*}
 Let $\beta=\cfrac{\theta^2(1-\rho L_m/L_{m+1})}{\theta^2 + C_{\text{err}}\delta L_{m}}$ and combining the above with \eqref{eq:conv1}, we obtain
\begin{equation} \|u-u_{\text{h}}^{m+1}\|_{a}^{2}+\cfrac{1}{\delta L_{m+1}}\sum_{J=1}^{2N}(S_{J}^{m+1})^{2} \leq(1-\beta)\|u-u_{\text{h}}^{m}\|_{a}^{2}+\frac{(1-\beta)}{\delta L_{m}}\sum_{J=1}^{2N}(S_{J}^{m})^{2}.
\end{equation}
Hence, Theorem \ref{thm:conv} is proved.

 \bibliographystyle{plain}
\bibliography{references}
\end{document}